\newtheorem{theorem}{Theorem}
\newtheorem{corollary}[theorem]{Corollary}
\newtheorem{definition}[theorem]{Definition}
\newtheorem{lemma}[theorem]{Lemma}
\newtheorem{notation}[theorem]{Notation}
\newtheorem{proposition}[theorem]{Proposition}
\newtheorem{remark}[theorem]{Remark}
\newenvironment{proof}[1][Proof]{\textbf{#1.} }{\ \rule{0.5em}{0.5em}}
\begin{document}

\title{Axiomatic Differential Geometry III-3\\-Its Landscape-\\Chapter 3: The Old Kingdom of Differential Geometers}
\author{Hirokazu NISHIMURA\\Institute of Mathematics\\University of Tsukuba\\Tsukuba, Ibaraki, 305-8571, JAPAN}
\maketitle
\begin{abstract}
The principal objective of this paer is to study the relationship between the
old kingdom of differential geometry (the category of smooth manifolds) and
its new kingdom (the category of functors on the category of Weil algebras to
some smooth category). It is shown that the canonical embedding of the old
kingdom into the new kingdom preserves Weil functors.
\end{abstract}

\section{Introduction}

Roughly speaking, the path to axiomatic differential geometry is composed of
five acts. \textbf{Act One} was Weil's algebraic treatment of nilpotent
infinitesimals in \cite{wei}, namely, the introduction of so-called
\textit{Weil algebras}. It showed that nilpotent infinitesimals could be
grasped algebraically. While nilpotent infinitesimals are imaginary entities,
Weil algebras are real ones. \textbf{Act Two} began almost at the same time
with Steenrod's introduction of \textit{convenient categories} of topological
spaces (cf. \cite{st}), consisting of a string of proposals of convenient
categories of smooth spaces. Its principal slogan was that the category of
differential geometry should be (locally) cartesian closed. The string was
panoramized by \cite{stacey}\ as well as \cite{baez}. \textbf{Act
Three}$\mathbb{\ }$was so-called \textit{synthetic differential geometry}, in
which synthetic methods as well as nilpotent infinitesimals play a predominant
role. It demonstrated amply that differential geometry could be made
\textit{axiomatic }in the same sense that Euclidean geometry is so, though it
should resort to reincarnation of nilpotent infinitesimals. In any case,
synthetic differential geometers were forced to fabricate their own world,
called \textit{well-adapted models}, where they could indulge in their
favorite nilpotent infinitesimals incessantly. Their unblushing use of
moribund nilpotent infinitesimals alienated most of orthodox mathematicians,
because nilpotent infinitesimals were almost eradicated as genuine hassle and
replaced by so-called $\varepsilon-\delta$ arguments in the 19th century. The
reader is referred to \cite{kock} and \cite{lav} for good treatises on
synthetic differential geometry. \textbf{Act Four} was the introduction of
\textit{Weil functors} and their thorough study by what was called the Czech
school of differential geometers in the 1980's, for which the reader is
referred to Chapter VIII of \cite{kol} and \S 31 of \cite{kri}. Weil functors,
which are a direct generalization of the tangent bundle functor, opens a truly
realistic path of axiomatizing differential geometry without nilpotent
infinitesimals. Then \textbf{Act Five} is our \textit{axiomatic differential
geometry}, which is tremendously indebted to all previous four acts. For
axiomatic differential geometry, the reader is referred to \cite{nishi3},
\cite{nishi4}, \cite{nishi5}, \cite{nishi6}, \cite{nishi7}, \cite{nishi9} and
\cite{nishi10}.

In our previous two papers \cite{nishi9} and \cite{nishi10}, we have developed
model theory for axiomatic differential geometry, in which the category
$\mathcal{K}_{\mathbf{Smooth}}$\ of functors on the category $\mathbf{Weil}%
_{\mathbf{R}}$\ of Weil algebras to the smooth category $\mathbf{Smooth}$\ (by
which we mean any proposed or possible convenient category of smooth spaces)
and their natural transformations play a crucial role. We will study the
relationship between the category $\mathbf{Mf}$ of smooth manifolds and smooth
mappings and our new kingdom $\mathcal{K}_{\mathbf{Smooth}}$\ as well as that
between $\mathbf{Smooth}$\ and $\mathcal{K}_{\mathbf{Smooth}}$ in this paper.

\section{Convenient Categories of Smooth Spaces}

The category of topological spaces and continuous mappins is by no means
cartesian closed. In 1967 Steenrod \cite{st} popularized the idea of
\textit{convenient category} by announcing that the category of compactly
generated spaces and continuous mappings renders a good setting for algebraic
topology. The proposed category is cartesian closed, complete and cocomplete,
and contains all CW complexes.

At about the same time, an attempt to give a convenient category of smooth
spaces began, and we have a few candidates at present. For a thorough study
upon the relationship among these already proposed candidates, the reader is
referred to \cite{stacey}, in which he or she will find, by way of example,
that the category of Fr\"{o}licher spaces is a full subcategory of that of
Souriau spaces, and the category of Souriau spaces is in turn a full
subcategory of that of Chen spaces. We have no intention to discuss which is
the best convenient category of smooth spaces here, but we note in passing
that both the category of Souriau spaces and that of Chen spaces are locally
cartesian closed, while that of Fr\"{o}licher spaces is not. At present we
content ourselves with denoting some of such convenient categories of smooth
spaces by $\mathbf{Smooth}$, which is required to be complete and cartesian
closed at least, containing the category $\mathbf{Mf}$ of smooth manifolds as
a full subcategory. Obviously the category $\mathbf{Mf}$\ contains the set
$\mathbf{R}$\ of real numbers.

\section{Weil Functors}

Weil algebras were introduced by Weil himself \cite{wei}. For a thorough
treatment of Weil algebras as smooth algebras, the reader is referred to III.5
in \cite{kock}.

\begin{notation}
We denote by $\mathbf{Weil}_{\mathbf{R}}$\ the category of Weil algebras over
$\mathbf{R}$.
\end{notation}

Let us endow the category $\mathbf{Smooth}$\ with Weil functors.

\begin{proposition}
Let $W$\ be an object in the category $\mathbf{Weil}_{\mathbf{R}}$\ with its
finite presentation
\[
W=C^{\infty}\left(  \mathbf{R}^{n}\right)  /I
\]
as a smooth algebra in the sense of III.5 of \cite{kock}. Let $X,Y\in
\mathbf{Smooth}$, $f,g\in\mathbf{Smooth}\left(  \mathbf{R}^{n},X\right)  $,
and $h\in\mathbf{Smooth}\left(  X,Y\right)  $. If
\[
f\thicksim_{W}g\text{,}%
\]
then
\[
h\circ f\thicksim_{W}h\circ g
\]
\end{proposition}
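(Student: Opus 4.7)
The relation $f\thicksim_{W}g$ is (presumably, as in \cite{nishi9}, \cite{nishi10}, and the standard treatments in \cite{kol}, \cite{kri}) the equivalence relation on $\mathbf{Smooth}\left(  \mathbf{R}^{n},X\right)$ defined by declaring $f\thicksim_{W}g$ to hold precisely when, for every test function $\varphi\in\mathbf{Smooth}\left(  X,\mathbf{R}\right)$, the difference $\varphi\circ f-\varphi\circ g$ lies in the defining ideal $I\subseteq C^{\infty}\left(  \mathbf{R}^{n}\right)$. In other words, $f$ and $g$ induce the same smooth-algebra homomorphism $C^{\infty}\left(  X\right)  \to W=C^{\infty}\left(  \mathbf{R}^{n}\right)  /I$. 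The plan is simply to exploit the functorial nature of this test-function characterization.

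First I would recall this definition explicitly, emphasizing that the relation is detected by precomposition with $\mathbf{R}$-valued smooth functions on the target. Next, given $h\in\mathbf{Smooth}\left(  X,Y\right)$ and an arbitrary test function $\psi\in\mathbf{Smooth}\left(  Y,\mathbf{R}\right)$, I would observe that $\psi\circ h$ lies in $\mathbf{Smooth}\left(  X,\mathbf{R}\right)$, since $\mathbf{Smooth}$ is a category. Applying the hypothesis $f\thicksim_{W}g$ to the particular test function $\varphi:=\psi\circ h$ yields
\[
\left(  \psi\circ h\right)  \circ f-\left(  \psi\circ h\right)  \circ g\in I\text{.}
\]

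By associativity of composition in $\mathbf{Smooth}$, this is exactly $\psi\circ\left(  h\circ f\right)  -\psi\circ\left(  h\circ g\right)  \in I$. Since $\psi\in\mathbf{Smooth}\left(  Y,\mathbf{R}\right)$ was arbitrary, this proves $h\circ f\thicksim_{W}h\circ g$, as required.

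There is no real obstacle: the argument is a one-line reshuffling of associative composition, and the only thing that could go wrong is a mismatch with the precise definition of $\thicksim_{W}$ adopted in the earlier papers of this series. If the definition is phrased instead in terms of the induced smooth-algebra maps $f^{\ast},g^{\ast}:C^{\infty}\left(  X\right)  \to W$ agreeing, the proof is the dual reformulation: $\left(  h\circ f\right)  ^{\ast}=f^{\ast}\circ h^{\ast}=g^{\ast}\circ h^{\ast}=\left(  h\circ g\right)  ^{\ast}$, using contravariant functoriality of $C^{\infty}\left(  -\right)$. Either way the content is that $\thicksim_{W}$ is, by construction, stable under postcomposition, which is precisely what is needed to guarantee that $T^{W}$ extends to a well-defined functor on $\mathbf{Smooth}$.
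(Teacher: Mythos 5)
Your proposal is correct and coincides with the paper's own proof: the paper likewise takes an arbitrary test function $\varsigma\in\mathbf{Smooth}\left(Y,\mathbf{R}\right)$ and observes that $\varsigma\circ\left(h\circ f\right)-\varsigma\circ\left(h\circ g\right)=\left(\varsigma\circ h\right)\circ f-\left(\varsigma\circ h\right)\circ g\in I$ because $\varsigma\circ h$ is a test function on $X$. Your guess at the definition of $\thicksim_{W}$ is the one in force, so no further comment is needed.
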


\begin{proof}
Given $\varsigma\in\mathbf{Smooth}\left(  Y,\mathbf{R}\right)  $, we have
\begin{align*}
&  \varsigma\circ\left(  h\circ f\right)  -\varsigma\circ\left(  h\circ
g\right) \\
&  =\left(  \varsigma\circ h\right)  \circ f-\left(  \varsigma\circ h\right)
\circ g\in I
\end{align*}
so that we have the desired result.
\end{proof}

\begin{corollary}
We can naturally make $\underline{\mathbf{T}}_{\mathbf{Smooth}}^{W}$\ a
functor
\[
\underline{\mathbf{T}}_{\mathbf{Smooth}}^{W}:\mathbf{Smooth}\rightarrow
\mathbf{Smooth}%
\]
\end{corollary}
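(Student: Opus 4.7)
The plan is to leverage the proposition directly: it shows that the relation $\sim_W$ is stable under postcomposition by an arbitrary $\mathbf{Smooth}$-morphism, which is exactly the condition needed to make $\underline{\mathbf{T}}_{\mathbf{Smooth}}^W$ functorial on morphisms. Since the object-level assignment $X \mapsto \underline{\mathbf{T}}_{\mathbf{Smooth}}^W(X)$ is (presumably) already defined as the quotient of $\mathbf{Smooth}(\mathbf{R}^n, X)$ by $\sim_W$, what remains is to specify the arrow-level assignment and verify the functor axioms plus smoothness.

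First I would define, for each $h \in \mathbf{Smooth}(X,Y)$, a map
\[
\underline{\mathbf{T}}_{\mathbf{Smooth}}^W(h) : \underline{\mathbf{T}}_{\mathbf{Smooth}}^W(X) \longrightarrow \underline{\mathbf{T}}_{\mathbf{Smooth}}^W(Y),\qquad [f]_W \longmapsto [h\circ f]_W .
\]
The preceding proposition makes this assignment well-defined on equivalence classes, since $f\sim_W g$ implies $h\circ f\sim_W h\circ g$. Next I would verify the two functor axioms: $\underline{\mathbf{T}}_{\mathbf{Smooth}}^W(\mathrm{id}_X)$ acts as $[f]_W\mapsto[\mathrm{id}_X\circ f]_W = [f]_W$, so it is the identity on $\underline{\mathbf{T}}_{\mathbf{Smooth}}^W(X)$; and for $h_1 \in \mathbf{Smooth}(X,Y)$, $h_2 \in \mathbf{Smooth}(Y,Z)$, associativity of composition gives $[(h_2\circ h_1)\circ f]_W = [h_2\circ(h_1\circ f)]_W$, i.e. $\underline{\mathbf{T}}_{\mathbf{Smooth}}^W(h_2\circ h_1) = \underline{\mathbf{T}}_{\mathbf{Smooth}}^W(h_2)\circ\underline{\mathbf{T}}_{\mathbf{Smooth}}^W(h_1)$.

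The one substantive step is to check that $\underline{\mathbf{T}}_{\mathbf{Smooth}}^W(h)$ is itself a morphism in $\mathbf{Smooth}$, not merely a set map between underlying sets. Here I would appeal to the cartesian closed / convenient structure of $\mathbf{Smooth}$: the quotient $\underline{\mathbf{T}}_{\mathbf{Smooth}}^W(X)$ inherits its smooth structure from $\mathbf{Smooth}(\mathbf{R}^n,X)$, and the operation of postcomposition by a fixed $h \in \mathbf{Smooth}(X,Y)$ gives a $\mathbf{Smooth}$-morphism $\mathbf{Smooth}(\mathbf{R}^n,X) \to \mathbf{Smooth}(\mathbf{R}^n,Y)$ by cartesian closedness; this descends through the quotients to the required smooth map. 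This is the step where the convenient-category hypotheses on $\mathbf{Smooth}$ are genuinely used.

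Finally, I would note that the construction is independent of the chosen finite presentation $W = C^\infty(\mathbf{R}^n)/I$, since two such presentations induce canonically isomorphic quotients on each $X$ that are natural in $X$; this naturality is again a consequence of the proposition applied to the transition maps. The main obstacle is really only the smoothness verification in the previous paragraph — the algebraic/categorical bookkeeping is an immediate corollary of the proposition — and the ease or difficulty of that step depends on exactly which convenient category $\mathbf{Smooth}$ has been fixed.
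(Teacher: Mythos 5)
Your proposal is correct and follows exactly the route the paper intends: the corollary is stated without proof precisely because the preceding proposition (stability of $\thicksim_{W}$ under postcomposition) is the only substantive point, and the morphism action $[f]_{W}\mapsto[h\circ f]_{W}$ together with the routine functor axioms is left implicit. Your additional remarks on smoothness of the induced map and independence of the presentation are sensible elaborations of details the paper silently omits.
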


\begin{proposition}
Let $W_{1}$\ and $W_{2}$\ be objects in the category $\mathbf{Weil}%
_{\mathbf{R}}$\ with their finite presentations
\begin{align*}
W_{1}  &  =C^{\infty}\left(  \mathbf{R}^{n}\right)  /I\\
W_{2}  &  =C^{\infty}\left(  \mathbf{R}^{m}\right)  /J
\end{align*}
as smooth algebras. Let
\[
\varphi:W_{1}\rightarrow W_{2}%
\]
be a morphism in the category $\mathbf{Weil}_{\mathbf{R}}$, so that there
exists a morphism
\[
\overleftarrow{\varphi}:\mathbf{R}^{m}\rightarrow\mathbf{R}^{n}%
\]
in the category $\mathbf{Smooth}$\ such that the composition with
$\overleftarrow{\varphi}$\ renders a mapping
\[
C^{\infty}\left(  \mathbf{R}^{n}\right)  \rightarrow C^{\infty}\left(
\mathbf{R}^{m}\right)
\]
inducing $\varphi$. Let $X\in\mathbf{Smooth}$ and $f,g\in\mathbf{Smooth}%
\left(  \mathbf{R}^{n},X\right)  $. If
\[
f\thicksim_{W_{1}}g
\]
then
\[
f\circ\overleftarrow{\varphi}\thicksim_{\mathbf{W}_{2}}g\circ\overleftarrow
{\varphi}%
\]
\end{proposition}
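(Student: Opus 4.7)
The plan is to unwind the definition of $\sim_{W_{2}}$ and reduce the claim to the hypothesis $f\thicksim_{W_{1}}g$ by observing that precomposition with $\overleftarrow{\varphi}$ commutes with precomposition with an arbitrary $\varsigma\in\mathbf{Smooth}(Y,\mathbf{R})$ (taking $Y=\mathbf{R}$, but more generally with any $\varsigma\in\mathbf{Smooth}(X,\mathbf{R})$). Concretely, I would fix an arbitrary $\varsigma\in\mathbf{Smooth}(X,\mathbf{R})$ and rewrite
\[
\varsigma\circ(f\circ\overleftarrow{\varphi})-\varsigma\circ(g\circ\overleftarrow{\varphi})=(\varsigma\circ f-\varsigma\circ g)\circ\overleftarrow{\varphi},
\]
which is a trivial associativity manipulation inside $C^{\infty}(\mathbf{R}^{m})$.

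Next I would use the hypothesis $f\thicksim_{W_{1}}g$, which (by the same reading of $\thicksim_{W}$ used in the preceding proposition) says exactly that $\varsigma\circ f-\varsigma\circ g\in I$. The remaining ingredient is the fact that the $\mathbf{R}$-algebra homomorphism
\[
(-)\circ\overleftarrow{\varphi}:C^{\infty}(\mathbf{R}^{n})\rightarrow C^{\infty}(\mathbf{R}^{m})
\]
descends to $\varphi:W_{1}\rightarrow W_{2}$, which by construction forces it to map $I$ into $J$. Applying this to $\varsigma\circ f-\varsigma\circ g\in I$ yields $(\varsigma\circ f-\varsigma\circ g)\circ\overleftarrow{\varphi}\in J$, and since $\varsigma$ was arbitrary this is exactly $f\circ\overleftarrow{\varphi}\thicksim_{W_{2}}g\circ\overleftarrow{\varphi}$.

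There is no real obstacle here; the only point worth stating carefully is the well-definedness remark that $(-)\circ\overleftarrow{\varphi}$ sends $I$ into $J$, which is automatic from the assumption that this map induces the given $\varphi:W_{1}\rightarrow W_{2}$. In spirit the argument is an exact parallel to the preceding proposition: there one composes on the left by $\varsigma\circ h$ to stay inside $I$, here one composes on the right by $\overleftarrow{\varphi}$ to move from $I$ into $J$.
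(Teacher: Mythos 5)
Your argument is correct and is essentially identical to the paper's own proof: both fix an arbitrary $\varsigma\in\mathbf{Smooth}(X,\mathbf{R})$, use associativity to rewrite the difference as $(\varsigma\circ f-\varsigma\circ g)\circ\overleftarrow{\varphi}$, and conclude from $\varsigma\circ f-\varsigma\circ g\in I$ together with the fact that precomposition with $\overleftarrow{\varphi}$ carries $I$ into $J$. (You even correct the paper's typo of writing $\mathbf{Smooth}(Y,\mathbf{R})$ where $X$ is meant.)
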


\begin{proof}
Given any $\varsigma\in\mathbf{Smooth}\left(  Y,\mathbf{R}\right)  $, we have
\begin{align*}
&  \varsigma\circ\left(  f\circ\overleftarrow{\varphi}\right)  -\varsigma
\circ\left(  g\circ\overleftarrow{\varphi}\right) \\
&  =\left(  \varsigma\circ f\right)  \circ\overleftarrow{\varphi}-\left(
\varsigma\circ g\right)  \circ\overleftarrow{\varphi}\\
&  =\left(  \varsigma\circ f-\varsigma\circ g\right)  \circ\overleftarrow
{\varphi}\in J\text{\ }%
\end{align*}
since $\varsigma\circ f-\varsigma\circ g\in I$, and the composition with
$\overleftarrow{\varphi}:\mathbf{R}^{n}\rightarrow\mathbf{R}^{m}$ maps
$I$\ into $J$.
\end{proof}

\begin{corollary}
The above procedure automatically induces a natural transformation
\[
\underline{\alpha}_{\varphi}^{\mathbf{Smooth}}:\underline{\mathbf{T}%
}_{\mathbf{Smooth}}^{W_{1}}\Rightarrow\underline{\mathbf{T}}_{\mathbf{Smooth}%
}^{W_{2}}%
\]
\end{corollary}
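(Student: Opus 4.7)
The plan is to build the component of $\underline{\alpha}_{\varphi}^{\mathbf{Smooth}}$ at each $X\in\mathbf{Smooth}$ by precomposition with a chosen lift $\overleftarrow{\varphi}$, namely
\[
(\underline{\alpha}_{\varphi}^{\mathbf{Smooth}})_{X}\colon\underline{\mathbf{T}}_{\mathbf{Smooth}}^{W_{1}}(X)\longrightarrow\underline{\mathbf{T}}_{\mathbf{Smooth}}^{W_{2}}(X),\qquad[f]_{W_{1}}\longmapsto[f\circ\overleftarrow{\varphi}]_{W_{2}}.
\]
The preceding Proposition already guarantees that this is well-defined on $\sim_{W_{1}}$-classes, so the substantive things left to verify are (a)~independence of the chosen lift $\overleftarrow{\varphi}$ of $\varphi$, and (b)~naturality of the assignment $X\mapsto(\underline{\alpha}_{\varphi}^{\mathbf{Smooth}})_{X}$.

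For (a), suppose $\overleftarrow{\varphi}_{1},\overleftarrow{\varphi}_{2}\colon\mathbf{R}^{m}\to\mathbf{R}^{n}$ both induce $\varphi$ on passing to quotients. Then for every $\psi\in C^{\infty}(\mathbf{R}^{n})$ we have $\psi\circ\overleftarrow{\varphi}_{1}-\psi\circ\overleftarrow{\varphi}_{2}\in J$. Taking $\psi=\varsigma\circ f$ for arbitrary $\varsigma\in\mathbf{Smooth}(X,\mathbf{R})$ and $f\in\mathbf{Smooth}(\mathbf{R}^{n},X)$ gives
\[
\varsigma\circ(f\circ\overleftarrow{\varphi}_{1})-\varsigma\circ(f\circ\overleftarrow{\varphi}_{2})=(\varsigma\circ f)\circ\overleftarrow{\varphi}_{1}-(\varsigma\circ f)\circ\overleftarrow{\varphi}_{2}\in J,
\]
so $f\circ\overleftarrow{\varphi}_{1}\thicksim_{W_{2}}f\circ\overleftarrow{\varphi}_{2}$, and the two lifts produce the same equivalence class.

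For (b), associativity of composition gives $(h\circ f)\circ\overleftarrow{\varphi}=h\circ(f\circ\overleftarrow{\varphi})$ on representatives, so for any $h\in\mathbf{Smooth}(X,Y)$ the square
\[
\underline{\mathbf{T}}_{\mathbf{Smooth}}^{W_{2}}(h)\circ(\underline{\alpha}_{\varphi}^{\mathbf{Smooth}})_{X}=(\underline{\alpha}_{\varphi}^{\mathbf{Smooth}})_{Y}\circ\underline{\mathbf{T}}_{\mathbf{Smooth}}^{W_{1}}(h)
\]
commutes once one passes to equivalence classes, using the previous two propositions for the well-definedness of all four maps involved. I expect the only genuine subtlety to be step~(a): the lift $\overleftarrow{\varphi}$ is far from unique, and it is easy to forget that the corollary's assertion of a \emph{canonical} natural transformation requires verifying that different lifts give the same component map. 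Everything else is a bookkeeping exercise on top of the identity $(\varsigma\circ h)\circ f=\varsigma\circ(h\circ f)$ and its variants already exploited in Propositions~1 and~3.
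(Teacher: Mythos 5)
Your construction is exactly the one the paper intends: the paper offers no proof of this corollary at all, treating it as automatic from the preceding Proposition, and your write-up supplies precisely the omitted verifications (well-definedness on $\thicksim_{W_{1}}$-classes via that Proposition, independence of the lift $\overleftarrow{\varphi}$, and naturality via associativity). Your step (a) is a genuine and correctly executed addition --- the Proposition only handles a fixed lift, and your observation that $\psi\circ\overleftarrow{\varphi}_{1}-\psi\circ\overleftarrow{\varphi}_{2}\in J$ for every $\psi\in C^{\infty}\left(\mathbf{R}^{n}\right)$ is the right way to close that gap --- so the proposal is correct and follows the paper's approach, merely making explicit what the paper leaves tacit.
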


\begin{notation}
Given an object $W$\ in the category $\mathbf{Weil}_{\mathbf{R}}$, the
restriction of the functor $\underline{\mathbf{T}}_{\mathbf{Smooth}}^{W}$\ to
the category $\mathbf{Mf}$\ is denoted by $\underline{\mathbf{T}}%
_{\mathbf{Mf}}^{W}$. Given a morphism $\varphi:W_{1}\rightarrow W_{2}$ in the
category $\mathbf{Weil}_{\mathbf{R}}$, the corresponding restriction of
$\underline{\alpha}_{\varphi}^{\mathbf{Smooth}}$ is denoted by $\underline
{\alpha}_{\varphi}^{\mathbf{Mf}}$.
\end{notation}

\begin{remark}
Weil functors
\[
\underline{\mathbf{T}}_{\mathbf{Mf}}^{W}:\mathbf{Mf}\rightarrow\mathbf{Mf}%
\]
are given distinct (but equivalent) definitions and studied thoroughly in
Chapter VIII of \cite{kol} in the finite-dimensional case and \S31 of
\cite{kri} in the infinite-dimensional case.
\end{remark}

It is well known that

\begin{proposition}
We have the following:

\begin{enumerate}
\item  Given an object $W$ in the category $\mathbf{Weil}_{\mathbf{R}}$, the
functor
\[
\underline{\mathbf{T}}_{\mathbf{Mf}}^{W}:\mathbf{Mf}\rightarrow\mathbf{Mf}%
\]
abides by the following conditions:

\begin{itemize}
\item $\underline{\mathbf{T}}_{\mathbf{Mf}}^{W}$ preserves finite products.

\item  The functor
\[
\underline{\mathbf{T}}_{\mathbf{Mf}}^{\mathbf{R}}:\mathbf{Mf}\rightarrow
\mathbf{Mf}%
\]
is the identity functor.

\item  We have
\[
\underline{\mathbf{T}}_{\mathbf{Mf}}^{W_{2}}\circ\underline{\mathbf{T}%
}_{\mathbf{Mf}}^{W_{1}}=\underline{\mathbf{T}}_{\mathbf{Mf}}^{W_{1}%
\otimes_{\mathbf{R}}W_{2}}%
\]
for any objects $W_{1}$ and $W_{2}$ in the category $\mathbf{Weil}%
_{\mathbf{R}}$.
\end{itemize}

\item  Given a morphism $\varphi:W_{1}\rightarrow W_{2}$ in the category
$\mathbf{Weil}_{\mathbf{R}}$, $\underline{\alpha}_{\varphi}^{\mathbf{Mf}%
}:\underline{\mathbf{T}}_{\mathbf{Mf}}^{W_{1}}\Rightarrow\underline
{\mathbf{T}}_{\mathbf{Mf}}^{W_{2}}$ is a natural transformation subject to the
following conditions:

\begin{itemize}
\item  We have
\[
\underline{\alpha}_{\mathrm{id}_{W}}^{\mathbf{Mf}}=\mathrm{id}_{\underline
{\mathbf{T}}_{\mathbf{Mf}}^{W}}%
\]
for any identity morphism $\mathrm{id}_{W}:W\rightarrow W$ in the category
$\mathbf{Weil}_{\mathbf{R}}$.

\item  We have
\[
\underline{\alpha}_{\psi}^{\mathbf{Mf}}\cdot\underline{\alpha}_{\varphi
}^{\mathbf{Mf}}=\underline{\alpha}_{\psi\circ\varphi}^{\mathbf{Mf}}%
\]
for any morphisms $\varphi:W_{1}\rightarrow W_{2}$ and $\psi:W_{2}\rightarrow
W_{3}$ in the category $\mathbf{Weil}_{\mathbf{R}}$.

\item  Given an object $W$\ and a morphism $\varphi:W_{1}\rightarrow W_{2}%
$\ in the category $\mathbf{Weil}_{\mathbf{R}}$, the diagrams
\[%
\begin{array}
[c]{ccc}%
\underline{\mathbf{T}}_{\mathbf{Mf}}^{W}\circ\underline{\mathbf{T}%
}_{\mathbf{Mf}}^{W_{1}} &
\begin{array}
[c]{c}%
\underline{\mathbf{T}}_{\mathbf{Mf}}^{W}\circ\underline{\alpha}_{\varphi
}^{\mathbf{Mf}}\\
\Rightarrow
\end{array}
& \underline{\mathbf{T}}_{\mathbf{Mf}}^{W}\circ\underline{\mathbf{T}%
}_{\mathbf{Mf}}^{W_{2}}\\
\parallel &  & \parallel\\
\underline{\mathbf{T}}_{\mathbf{Mf}}^{W_{1}\otimes_{\mathbf{R}}W} &
\begin{array}
[c]{c}%
\Rightarrow\\
\underline{\alpha}_{\varphi\otimes_{\mathbf{R}}\mathrm{id}_{W}}^{\mathbf{Mf}}%
\end{array}
& \underline{\mathbf{T}}_{\mathbf{Mf}}^{W_{2}\otimes_{\mathbf{R}}W}%
\end{array}
\]
and
\[%
\begin{array}
[c]{ccc}%
\underline{\mathbf{T}}_{\mathbf{Mf}}^{W\otimes_{\mathbf{R}}W_{1}} &
\begin{array}
[c]{c}%
\underline{\alpha}_{\mathrm{id}_{W}\otimes_{\mathbf{R}}\varphi}^{\mathbf{Mf}%
}\\
\Rightarrow
\end{array}
& \underline{\mathbf{T}}_{\mathbf{Mf}}^{W\otimes_{\mathbf{R}}W_{2}}\\
\parallel &  & \parallel\\
\underline{\mathbf{T}}_{\mathbf{Mf}}^{W_{1}}\circ\underline{\mathbf{T}%
}_{\mathbf{Mf}}^{W} &
\begin{array}
[c]{c}%
\Rightarrow\\
\underline{\alpha}_{\varphi}^{\mathbf{Mf}}\circ\underline{\mathbf{T}%
}_{\mathbf{Mf}}^{W}%
\end{array}
& \underline{\mathbf{T}}_{\mathbf{Mf}}^{W_{2}}\circ\underline{\mathbf{T}%
}_{\mathbf{Mf}}^{W}%
\end{array}
\]
are commutative.
\end{itemize}

\item  Given an object $W$ in the category $\mathbf{Weil}_{\mathbf{R}}$, we
have
\[
\underline{\mathbf{T}}^{W}\left(  \mathbf{R}\right)  =W
\]

\item  Given a morphism $\varphi:W_{1}\rightarrow W_{2}$ in the category
$\mathbf{Weil}_{\mathbf{R}}$, we have
\[
\underline{\alpha}_{\varphi}\left(  \mathbf{R}\right)  =\varphi
\]
\end{enumerate}
\end{proposition}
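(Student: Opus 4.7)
The plan is to reduce every clause to the classical theory of Weil functors on smooth manifolds as developed in Chapter VIII of \cite{kol} in the finite-dimensional case and \S 31 of \cite{kri} in the infinite-dimensional case. The key observation is that, once restricted to $\mathbf{Mf}$, the quotient $\mathbf{Mf}\left(\mathbf{R}^{n},M\right)/{\thicksim_{W}}$ coincides naturally with the classical Weil bundle $T^{W}(M)$; the proposition therefore merely catalogues standard structural features of these bundles. I would first check agreement with the classical construction on $\mathbf{R}^{k}$, where $\mathbf{Mf}\left(\mathbf{R}^{n},\mathbf{R}^{k}\right)/{\thicksim_{W}}=W^{k}$ canonically, and then invoke the gluing-along-charts argument already recorded in the references.

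Clauses (3) and (4) fall out of the construction. Taking $\varsigma=\mathrm{id}_{\mathbf{R}}$ in the definition of $\thicksim_{W}$ reduces the relation on $\mathbf{Smooth}\left(\mathbf{R}^{n},\mathbf{R}\right)=C^{\infty}\left(\mathbf{R}^{n}\right)$ to membership in $I$, yielding
\[
\underline{\mathbf{T}}^{W}\left(\mathbf{R}\right)=C^{\infty}\left(\mathbf{R}^{n}\right)/I=W.
\]
The natural transformation $\underline{\alpha}_{\varphi}$ is by construction induced by precomposition with a chosen lift $\overleftarrow{\varphi}$, and the map on $W_{1}=C^{\infty}\left(\mathbf{R}^{n_{1}}\right)/I_{1}$ it produces equals $\varphi$ by the very choice of the lift, which is precisely clause (4).

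For clause (1), the second bullet is the case $n=0$, $I=0$, in which $\underline{\mathbf{T}}^{\mathbf{R}}\left(M\right)=\mathbf{Mf}\left(\ast,M\right)=M$. Product preservation follows because $\mathbf{Mf}\left(\mathbf{R}^{n},-\right)$ preserves finite products and the equivalence $\thicksim_{W}$ descends to the factors via test functions of the form $\varsigma'\circ\pi_{X}$ and $\varsigma'\circ\pi_{Y}$. The composition-equals-tensor identity $\underline{\mathbf{T}}_{\mathbf{Mf}}^{W_{2}}\circ\underline{\mathbf{T}}_{\mathbf{Mf}}^{W_{1}}=\underline{\mathbf{T}}_{\mathbf{Mf}}^{W_{1}\otimes_{\mathbf{R}}W_{2}}$ I would establish by first verifying it on Euclidean spaces, where both sides are visibly identifiable with $W_{1}\otimes_{\mathbf{R}}W_{2}\otimes_{\mathbf{R}}\mathbf{R}^{k}$, and then passing to a general manifold through an atlas; this gluing step is exactly Theorem 35.13 of \cite{kol} and its counterpart in \S 31.7 of \cite{kri}.

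The principal obstacle is this composition-equals-tensor identity, since it requires a coherent comparison of two \emph{a priori} different functors on every chart together with their patching under smooth transition maps, and one has to verify that the chartwise isomorphisms are independent of all choices. Once it is in hand, clause (2) becomes routine bookkeeping: the functoriality $\underline{\alpha}_{\psi\circ\varphi}^{\mathbf{Mf}}=\underline{\alpha}_{\psi}^{\mathbf{Mf}}\cdot\underline{\alpha}_{\varphi}^{\mathbf{Mf}}$ and the identity axiom follow because any two presentational lifts of a Weil-algebra morphism differ by a map landing in the target ideal, so the induced natural transformation is independent of the lift; and the two exchange diagrams simply express bifunctoriality of $-\otimes_{\mathbf{R}}-$ transported across the identification in the third bullet of clause (1).
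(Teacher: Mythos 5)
The paper offers no argument for this proposition at all: it is prefaced by ``It is well known that,'' and the preceding Remark delegates everything to Chapter VIII of \cite{kol} and \S 31 of \cite{kri}. Your sketch is therefore doing strictly more work than the source, and the route you take --- verify each clause on $\mathbf{R}^{k}$ directly from the quotient description $\mathbf{Mf}\left(\mathbf{R}^{n},M\right)/{\thicksim_{W}}$, then glue along an atlas --- is exactly the three-step pattern the paper itself uses for its other unproved claims (Proposition \ref{t7.1} and Lemma \ref{t6.3}), so it is faithful in spirit. Your treatments of clauses (3) and (4) and of well-definedness of $\underline{\alpha}_{\varphi}$ under change of lift are correct. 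Two spots are thinner than they should be. First, for product preservation you only argue the easy direction: test functions of the form $\varsigma^{\prime}\circ\pi_{X}$ show that $\thicksim_{W}$ on $X\times Y$ refines the componentwise relation, but the substantive claim is the converse --- that componentwise equivalence forces $\varsigma\circ\left(f_{X},f_{Y}\right)-\varsigma\circ\left(g_{X},g_{Y}\right)\in I$ for an \emph{arbitrary} $\varsigma:X\times Y\rightarrow\mathbf{R}$ --- which needs an interpolation/Hadamard-type argument. Second, your opening identification of the quotient construction with ``the classical Weil bundle $T^{W}\left(M\right)$'' is itself one of the nontrivial facts here (the paper's Remark calls the definitions ``distinct (but equivalent)''), so invoking classical theorem numbers for the composition-equals-tensor identity presupposes a comparison isomorphism you have not exhibited; and since the proposition asserts strict equality $\underline{\mathbf{T}}_{\mathbf{Mf}}^{W_{2}}\circ\underline{\mathbf{T}}_{\mathbf{Mf}}^{W_{1}}=\underline{\mathbf{T}}_{\mathbf{Mf}}^{W_{1}\otimes_{\mathbf{R}}W_{2}}$ rather than a natural isomorphism, a purely chartwise identification will at best deliver the latter. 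Both points are within the tolerance the paper itself sets (``the details can safely be left to the reader''), but they are where the real content lies.
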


\section{A New Kingdom for Differential Geometers}

\begin{notation}
We introduce the following notation:

\begin{enumerate}
\item  We denote by $\mathcal{K}_{\mathbf{Smooth}}$\ the category whose
objects are functors from the category $\mathbf{Weil}_{\mathbf{R}}$ to the
category $\mathbf{Smooth}$ and whose morphisms are their natural transformations.

\item  Given an object $W$ in the category $\mathbf{Weil}_{\mathbf{R}}$, we
denote by
\[
\mathbf{T}_{\mathbf{Smooth}}^{W}:\mathcal{K}_{\mathbf{Smooth}}\rightarrow
\mathcal{K}_{\mathbf{Smooth}}%
\]
the functor obtained as the composition with the functor
\[
W\otimes_{\mathbf{R}}\cdot:\mathbf{Weil}_{\mathbf{R}}\rightarrow
\mathbf{Weil}_{\mathbf{R}}%
\]
so that for any object $M$\ in the category $\mathcal{K}_{\mathbf{Smooth}}$,
we have
\[
\mathbf{T}_{\mathbf{Smooth}}^{W}\left(  M\right)  =M\left(  W\otimes
_{\mathbf{R}}\cdot\right)
\]

\item  Given a morphism $\varphi:W_{1}\rightarrow W_{2}$ in the category
$\mathbf{Weil}_{\mathbf{R}}$, we denote by
\[
\alpha_{\varphi}^{\mathbf{Smooth}}:\mathbf{T}_{\mathbf{Smooth}}^{W_{1}%
}\Rightarrow\mathbf{T}_{\mathbf{Smooth}}^{W_{2}}%
\]
the natural transformation such that, given an object $W$\ in the category
$\mathbf{Weil}_{\mathbf{R}}$, the morphism
\[
\alpha_{\varphi}^{\mathbf{Smooth}}\left(  M\right)  :\mathbf{T}%
_{\mathbf{Smooth}}^{W_{1}}\left(  M\right)  \rightarrow\mathbf{T}%
_{\mathbf{Smooth}}^{W_{2}}\left(  M\right)
\]
is
\[
M\left(  \varphi\otimes_{\mathbf{R}}\mathrm{id}_{W}\right)  :M\left(
W_{1}\otimes_{\mathbf{R}}W\right)  \rightarrow M\left(  W_{2}\otimes
_{\mathbf{R}}W\right)
\]

\item  We denote by $\mathbb{R}_{\mathbf{Smooth}}$ the functor
\[
\mathbf{R}\mathbb{\otimes}_{\mathbf{R}}\cdot:\mathbf{Weil}_{\mathbf{R}%
}\rightarrow\mathbf{Smooth}%
\]
\end{enumerate}
\end{notation}

We have established the following proposition in \cite{nishi9} and
\cite{nishi10}.

\begin{proposition}
We have the following:

\begin{enumerate}
\item $\mathcal{K}_{\mathbf{Smooth}}$ is a category which is complete and
cartesian closed.

\item  Given an object $W$ in the category $\mathbf{Weil}_{\mathbf{R}}$, the
functor
\[
\mathbf{T}_{\mathbf{Smooth}}^{W}:\mathcal{K}_{\mathbf{Smooth}}\rightarrow
\mathcal{K}_{\mathbf{Smooth}}%
\]
abides by the following conditions:

\begin{itemize}
\item $\mathbf{T}_{\mathbf{Smooth}}^{W}$ preserves limits.

\item  The functor
\[
\mathbf{T}_{\mathbf{Smooth}}^{\mathbf{R}}:\mathcal{K}_{\mathbf{Smooth}%
}\rightarrow\mathcal{K}_{\mathbf{Smooth}}%
\]
is the identity functor.

\item  We have
\[
\mathbf{T}_{\mathbf{Smooth}}^{W_{1}}\circ\mathbf{T}_{\mathbf{Smooth}}^{W_{2}%
}=\mathbf{T}_{\mathbf{Smooth}}^{W_{1}\otimes_{\mathbf{R}}W_{2}}%
\]
for any objects $W_{1}$ and $W_{2}$ in the category $\mathbf{Weil}%
_{\mathbf{R}}$.

\item  We have
\[
\mathbf{T}_{\mathbf{Smooth}}^{W}\left(  M^{N}\right)  =\mathbf{T}%
_{\mathbf{Smooth}}^{W}\left(  M\right)  ^{\mathbf{T}_{\mathbf{Smooth}}%
^{W}\left(  N\right)  }%
\]
for any objects $M$\ and $N$\ in the category $\mathcal{K}_{\mathbf{Smooth}}$.
\end{itemize}

\item  Given a morphism $\varphi:W_{1}\rightarrow W_{2}$ in the category
$\mathbf{Weil}_{\mathbf{R}}$,
\[
\alpha_{\varphi}:\mathbf{T}_{\mathbf{Smooth}}^{W_{1}}\Rightarrow
\mathbf{T}_{\mathbf{Smooth}}^{W_{2}}%
\]
is a natural transformation subject to the following conditions:

\begin{itemize}
\item  We have
\[
\alpha_{\mathrm{id}_{W}}^{\mathbf{Smooth}}=\mathrm{id}_{\mathbf{T}^{W}}%
\]
for any identity morphism $\mathrm{id}_{W}:W\rightarrow W$ in the category
$\mathbf{Weil}_{\mathbf{R}}$.

\item  We have
\[
\alpha_{\psi}^{\mathbf{Smooth}}\circ\alpha_{\varphi}^{\mathbf{Smooth}}%
=\alpha_{\psi\circ\varphi}^{\mathbf{Smooth}}%
\]
for any morphisms $\varphi:W_{1}\rightarrow W_{2}$ and $\psi:W_{2}\rightarrow
W_{3}$ in the category $\mathbf{Weil}_{\mathbf{R}}$.

\item  Given objects $M$\ and $N$\ in the category $\mathcal{K}%
_{\mathbf{Smooth}}$, the diagram
\[%
\begin{array}
[c]{ccc}%
\begin{array}
[c]{c}%
\mathbf{T}_{\mathbf{Smooth}}^{W_{1}}\left(  M\right)  ^{\mathbf{T}%
_{\mathbf{Smooth}}^{W_{1}}\left(  N\right)  }\\
\parallel\\
\mathbf{T}_{\mathbf{Smooth}}^{W_{1}}\left(  M^{N}\right)
\end{array}
& \underrightarrow{\alpha_{\varphi}^{\mathbf{Smooth}}\left(  M\right)
^{\mathbf{T}_{\mathbf{Smooth}}^{W_{1}}\left(  N\right)  }} & \mathbf{T}%
_{\mathbf{Smooth}}^{W_{2}}\left(  M\right)  ^{\mathbf{T}_{\mathbf{Smooth}%
}^{W_{1}}\left(  N\right)  }\\%
\begin{array}
[c]{ccc}%
\alpha_{\varphi}^{\mathbf{Smooth}}\left(  M^{N}\right)  & \downarrow &
\qquad\qquad
\end{array}
& \nearrow\mathbf{T}_{\mathbf{Smooth}}^{W_{2}}\left(  M\right)  ^{\alpha
_{\varphi}^{\mathbf{Smooth}}\left(  N\right)  } & \\%
\begin{array}
[c]{c}%
\mathbf{T}_{\mathbf{Smooth}}^{W_{2}}\left(  M^{N}\right) \\
\parallel\\
\mathbf{T}_{\mathbf{Smooth}}^{W_{2}}\left(  M\right)  ^{\mathbf{T}%
_{\mathbf{Smooth}}^{W_{2}}\left(  N\right)  }%
\end{array}
&  &
\end{array}
\]
is commutative.

\item  Given an object $W$\ and a morphism $\varphi:W_{1}\rightarrow W_{2}%
$\ in the category $\mathbf{Weil}_{\mathbf{R}}$, the diagrams
\[%
\begin{array}
[c]{ccc}%
\mathbf{T}_{\mathbf{Smooth}}^{W}\circ\mathbf{T}_{\mathbf{Smooth}}^{W_{1}} &
\begin{array}
[c]{c}%
\mathbf{T}_{\mathbf{Smooth}}^{W}\circ\alpha_{\varphi}^{\mathbf{Smooth}}\\
\Rightarrow
\end{array}
& \mathbf{T}_{\mathbf{Smooth}}^{W}\circ\mathbf{T}_{\mathbf{Smooth}}^{W_{2}}\\
\parallel &  & \parallel\\
\mathbf{T}_{\mathbf{Smooth}}^{W\otimes_{\mathbf{R}}W_{1}} &
\begin{array}
[c]{c}%
\Rightarrow\\
\alpha_{\mathrm{id}_{W}\otimes_{\mathbf{R}}\varphi}^{\mathbf{Smooth}}%
\end{array}
& \mathbf{T}_{\mathbf{Smooth}}^{W\otimes_{\mathbf{R}}W_{2}}%
\end{array}
\]
and
\[%
\begin{array}
[c]{ccc}%
\mathbf{T}_{\mathbf{Smooth}}^{W_{1}\otimes_{\mathbf{R}}W} &
\begin{array}
[c]{c}%
\alpha_{\varphi\otimes_{\mathbf{R}}\mathrm{id}_{W}}^{\mathbf{Smooth}}\\
\Rightarrow
\end{array}
& \mathbf{T}_{\mathbf{Smooth}}^{W_{2}\otimes_{\mathbf{R}}W}\\
\parallel &  & \parallel\\
\mathbf{T}_{\mathbf{Smooth}}^{W_{1}}\circ\mathbf{T}_{\mathbf{Smooth}}^{W} &
\begin{array}
[c]{c}%
\Rightarrow\\
\alpha_{\varphi}^{\mathbf{Smooth}}\circ\mathbf{T}_{\mathbf{Smooth}}^{W}%
\end{array}
& \mathbf{T}_{\mathbf{Smooth}}^{W_{2}}\circ\mathbf{T}_{\mathbf{Smooth}}^{W}%
\end{array}
\]
are commutative.
\end{itemize}

\item  Given an object $W$ in the category $\mathbf{Weil}_{\mathbf{R}}$, we
have
\[
\mathbf{T}_{\mathbf{Smooth}}^{W}\left(  \mathbb{R}_{\mathbf{Smooth}}\right)
=\mathbb{R}_{\mathbf{Smooth}}\otimes_{\mathbf{R}}W
\]

\item  Given a morphism $\varphi:W_{1}\rightarrow W_{2}$ in the category
$\mathbf{Weil}_{\mathbf{R}}$, we have
\[
\alpha_{\varphi}^{\mathbf{Smooth}}\left(  \mathbb{R}_{\mathbf{Smooth}}\right)
=\mathbb{R}_{\mathbf{Smooth}}\otimes_{\mathbf{R}}\varphi
\]
\end{enumerate}
\end{proposition}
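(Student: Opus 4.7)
The plan is to observe that $\mathcal{K}_{\mathbf{Smooth}}$ is nothing but the functor category $[\mathbf{Weil}_{\mathbf{R}},\mathbf{Smooth}]$, so all five items reduce to standard facts about functor categories, together with the monoidal properties of $\otimes_{\mathbf{R}}$ on $\mathbf{Weil}_{\mathbf{R}}$. First I would handle item (1): limits in a functor category are computed pointwise, so completeness of $\mathbf{Smooth}$ drags up to completeness of $\mathcal{K}_{\mathbf{Smooth}}$; for cartesian closedness I would invoke the standard fact that $[\mathcal{C},\mathcal{D}]$ is cartesian closed whenever $\mathcal{D}$ is complete and cartesian closed and $\mathcal{C}$ is essentially small, noting that $\mathbf{Weil}_{\mathbf{R}}$ is essentially small because Weil algebras are finitely presented. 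The internal hom is given by the usual end formula
\[
(M^{N})(W)=\int_{V\in\mathbf{Weil}_{\mathbf{R}}}M(V)^{N(V)\times\mathbf{Weil}_{\mathbf{R}}(W,V)},
\]
which I would record for later use.

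Next I would treat item (2). Since $\mathbf{T}_{\mathbf{Smooth}}^{W}$ is by definition precomposition with the endofunctor $W\otimes_{\mathbf{R}}\cdot$ on $\mathbf{Weil}_{\mathbf{R}}$, preservation of limits is immediate from pointwiseness. The case $W=\mathbf{R}$ gives the identity functor because $\mathbf{R}\otimes_{\mathbf{R}}V\cong V$ naturally in $V$. The composition identity
\[
\mathbf{T}_{\mathbf{Smooth}}^{W_{1}}\circ\mathbf{T}_{\mathbf{Smooth}}^{W_{2}}=\mathbf{T}_{\mathbf{Smooth}}^{W_{1}\otimes_{\mathbf{R}}W_{2}}
\]
follows from associativity of the tensor product. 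Preservation of the exponential is the point that requires some care: substituting $W\otimes_{\mathbf{R}}V$ for $W$ in the above end and using the natural bijection $\mathbf{Weil}_{\mathbf{R}}(W\otimes_{\mathbf{R}}V,U)\cong\mathbf{Weil}_{\mathbf{R}}(V,W\otimes_{\mathbf{R}}U)$-type adjustments (really: reindexing the end along the faithful endofunctor $W\otimes_{\mathbf{R}}\cdot$) yields the equality $\mathbf{T}_{\mathbf{Smooth}}^{W}(M^{N})=\mathbf{T}_{\mathbf{Smooth}}^{W}(M)^{\mathbf{T}_{\mathbf{Smooth}}^{W}(N)}$. This is the step I expect to be the main obstacle, since naive pointwise reasoning is insufficient for exponentials in a functor category.

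For item (3) I would recognise that $\alpha_{\varphi}^{\mathbf{Smooth}}$ is simply whiskering of an arbitrary $M\in\mathcal{K}_{\mathbf{Smooth}}$ with the natural transformation $\varphi\otimes_{\mathbf{R}}\mathrm{id}:W_{1}\otimes_{\mathbf{R}}\cdot\Rightarrow W_{2}\otimes_{\mathbf{R}}\cdot$ between endofunctors of $\mathbf{Weil}_{\mathbf{R}}$. The identity and composition laws are then instances of functoriality of whiskering; compatibility with exponentials follows by unwrapping both triangles against the end formula of item (1) and using naturality of $\varphi\otimes_{\mathbf{R}}\mathrm{id}$ in the integration variable; compatibility with the tensor factors on each side is the bifunctoriality of $\otimes_{\mathbf{R}}$.

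Finally, items (4) and (5) are direct computations. Since $\mathbb{R}_{\mathbf{Smooth}}=\mathbf{R}\otimes_{\mathbf{R}}\cdot$, one has, for every object $V\in\mathbf{Weil}_{\mathbf{R}}$,
\[
\mathbf{T}_{\mathbf{Smooth}}^{W}(\mathbb{R}_{\mathbf{Smooth}})(V)=\mathbb{R}_{\mathbf{Smooth}}(W\otimes_{\mathbf{R}}V)=\mathbf{R}\otimes_{\mathbf{R}}W\otimes_{\mathbf{R}}V=(\mathbb{R}_{\mathbf{Smooth}}\otimes_{\mathbf{R}}W)(V),
\]
which gives (4); the same substitution, applied componentwise to the morphism $\varphi\otimes_{\mathbf{R}}\mathrm{id}_{V}$, yields (5). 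With this all items are established, the subtle step being the exponential-preservation clause of (2), for which the end formulation of the internal hom of $\mathcal{K}_{\mathbf{Smooth}}$ is indispensable.
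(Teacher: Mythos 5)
First, a point of comparison: the paper itself offers no proof of this proposition --- it is quoted from the earlier model-theory papers \cite{nishi9} and \cite{nishi10} --- so your argument can only be measured against the construction actually used there. The uncontroversial parts of your proposal are fine: pointwise limits give completeness of $\mathcal{K}_{\mathbf{Smooth}}$ and limit-preservation of the precomposition functor $\mathbf{T}_{\mathbf{Smooth}}^{W}$; the whiskering description of $\alpha_{\varphi}^{\mathbf{Smooth}}$ gives the identity and composition laws and the two tensor-compatibility squares; and items (4) and (5) are the direct substitutions you describe. (Two small strictness caveats: the proposition asserts equalities, not isomorphisms, so you should say why $\mathbf{R}\otimes_{\mathbf{R}}V=V$ and $W_{2}\otimes_{\mathbf{R}}\left(W_{1}\otimes_{\mathbf{R}}V\right)=\left(W_{1}\otimes_{\mathbf{R}}W_{2}\right)\otimes_{\mathbf{R}}V$ hold on the nose; the latter needs commutativity of $\otimes_{\mathbf{R}}$, not just associativity.)

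The genuine gap is exactly where you predicted it: the clause $\mathbf{T}_{\mathbf{Smooth}}^{W}\left(M^{N}\right)=\mathbf{T}_{\mathbf{Smooth}}^{W}\left(M\right)^{\mathbf{T}_{\mathbf{Smooth}}^{W}\left(N\right)}$, together with the exponential triangle in item (3). Your argument rests on a ``natural bijection $\mathbf{Weil}_{\mathbf{R}}\left(W\otimes_{\mathbf{R}}V,U\right)\cong\mathbf{Weil}_{\mathbf{R}}\left(V,W\otimes_{\mathbf{R}}U\right)$,'' i.e. on $W\otimes_{\mathbf{R}}\cdot$ being left adjoint to itself. This is false: for $V=\mathbf{R}$ the right-hand side is a singleton (there is exactly one morphism out of $\mathbf{R}$), while the left-hand side is $\mathbf{Weil}_{\mathbf{R}}\left(W,U\right)$, which is not a singleton in general (take $W=U=\mathbf{R}[x]/(x^{2})$). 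Since $\otimes_{\mathbf{R}}$ is the coproduct of Weil algebras, the true identity is $\mathbf{Weil}_{\mathbf{R}}\left(W\otimes_{\mathbf{R}}V,U\right)\cong\mathbf{Weil}_{\mathbf{R}}\left(W,U\right)\times\mathbf{Weil}_{\mathbf{R}}\left(V,U\right)$, and ``reindexing the end along $W\otimes_{\mathbf{R}}\cdot$'' only yields a canonical comparison morphism, not an isomorphism, let alone an equality. The cited papers sidestep this by not taking the hom-set-power end as the definition of the internal hom; instead the exponential of $\mathcal{K}_{\mathbf{Smooth}}$ is given by a formula of the shape
\[
\left(M^{N}\right)\left(W\right)=\int_{V\in\mathbf{Weil}_{\mathbf{R}}}M\left(W\otimes_{\mathbf{R}}V\right)^{N\left(W\otimes_{\mathbf{R}}V\right)}
\]
for which $\mathbf{T}_{\mathbf{Smooth}}^{W}\left(M^{N}\right)=\mathbf{T}_{\mathbf{Smooth}}^{W}\left(M\right)^{\mathbf{T}_{\mathbf{Smooth}}^{W}\left(N\right)}$ holds literally upon substituting $W\otimes_{\mathbf{R}}V$ for the argument; the real work is then the verification that this formula satisfies the exponential adjunction, using completeness and cartesian closedness of $\mathbf{Smooth}$. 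To repair your proof you must either adopt such a formula or supply an honest, strict comparison between the two ends; the adjunction you invoke does not exist.
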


\section{From the Old Kingdom to the New One}

\begin{notation}
We write
\[
i_{\mathbf{Smooth}}:\mathbf{Smooth}\rightarrow\mathcal{K}_{\mathbf{Smooth}}%
\]
for the functor
\begin{align*}
i_{\mathbf{Smooth}}\left(  \underline{M}\right)   &  :W\in\mathrm{Obj\,}%
\mathbf{We}i\mathbf{l}_{\mathbf{R}}\mapsto\underline{\mathbf{T}}%
_{\mathbf{Smooth}}^{W}\underline{M}\in\mathrm{Obj\,}\mathcal{K}%
_{\mathbf{Smooth}}\\
i_{\mathbf{Smooth}}\left(  \underline{M}\right)   &  :\varphi\in
\mathrm{Mor\,}\mathbf{We}i\mathbf{l}_{\mathbf{R}}\mapsto\underline{\alpha
}_{\varphi}^{\mathbf{Smooth}}\left(  \underline{M}\right)  \in\mathrm{Mor}%
\mathcal{K}_{\mathbf{Smooth}}%
\end{align*}
provided with an object object $\underline{M}$\ in the category
$\mathbf{Smooth}$, and
\[
i_{\mathbf{Smooth}}\left(  f\right)  \left(  W\right)  =\underline{\mathbf{T}%
}_{\mathbf{Smooth}}^{W}f:\underline{\mathbf{T}}_{\mathbf{Smooth}}%
^{W}\underline{M}_{1}\rightarrow\underline{\mathbf{T}}_{\mathbf{Smooth}}%
^{W}\underline{M}_{2}%
\]
provided with a morphism $f:\underline{M}_{1}\rightarrow\underline{M}_{2}$\ in
the category $\mathbf{Smooth}$ and an object $W$ in the category
$\mathbf{We}i\mathbf{l}_{\mathbf{R}}$. The restriction of $i_{\mathbf{Smooth}%
}$ to the subcategory $\mathbf{Mf}$\ is denoted by
\[
i_{\mathbf{Mf}}:\mathbf{Mf}\rightarrow\mathcal{K}_{\mathbf{Smooth}}%
\]
\end{notation}

\begin{theorem}
\label{t5.1}Given an object $W$ in the category $\mathbf{We}i\mathbf{l}%
_{\mathbf{R}}$, the diagram
\[%
\begin{array}
[c]{ccc}%
\mathbf{Mf} & \underrightarrow{i_{\mathbf{Mf}}} & \mathcal{K}_{\mathbf{Smooth}%
}\\
\underline{\mathbf{T}}_{\mathbf{Mf}}^{W}\downarrow &  & \downarrow
\mathbf{T}_{\mathbf{Smooth}}^{W}\\
\mathbf{Mf} & \overrightarrow{i_{\mathbf{Mf}}} & \mathcal{K}_{\mathbf{Smooth}}%
\end{array}
\]
is commutative.
\end{theorem}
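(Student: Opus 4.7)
The assertion is the equation of functors $i_{\mathbf{Mf}} \circ \underline{\mathbf{T}}^{W}_{\mathbf{Mf}} = \mathbf{T}^{W}_{\mathbf{Smooth}} \circ i_{\mathbf{Mf}}$. I will verify it on objects and on morphisms of $\mathbf{Mf}$. In each case both sides are themselves objects of $\mathcal{K}_{\mathbf{Smooth}}$ (respectively, natural transformations between such), so the task further reduces to checking an equality between functors $\mathbf{Weil}_{\mathbf{R}} \to \mathbf{Smooth}$ at every Weil algebra $W'$ and every morphism $\varphi : W_{1} \to W_{2}$ of Weil algebras. Everything is arranged so that this reduces to items already established in the preceding Proposition.

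Fix first an object $\underline{M}$ of $\mathbf{Mf}$ and compare the two candidate functors in $\mathcal{K}_{\mathbf{Smooth}}$ at an object $W'$. On the one hand, $i_{\mathbf{Mf}}(\underline{\mathbf{T}}^{W}_{\mathbf{Mf}}\underline{M})(W') = \underline{\mathbf{T}}^{W'}_{\mathbf{Smooth}}(\underline{\mathbf{T}}^{W}_{\mathbf{Mf}}\underline{M})$, which by the composition rule $\underline{\mathbf{T}}^{W'}_{\mathbf{Mf}}\circ \underline{\mathbf{T}}^{W}_{\mathbf{Mf}} = \underline{\mathbf{T}}^{W \otimes_{\mathbf{R}} W'}_{\mathbf{Mf}}$ equals $\underline{\mathbf{T}}^{W \otimes_{\mathbf{R}} W'}_{\mathbf{Mf}}\underline{M}$. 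On the other hand, $\mathbf{T}^{W}_{\mathbf{Smooth}}(i_{\mathbf{Mf}}(\underline{M}))(W') = i_{\mathbf{Mf}}(\underline{M})(W \otimes_{\mathbf{R}} W') = \underline{\mathbf{T}}^{W \otimes_{\mathbf{R}} W'}_{\mathbf{Mf}}\underline{M}$ by the very definition of $\mathbf{T}^{W}_{\mathbf{Smooth}}$. On a morphism $\varphi: W_{1} \to W_{2}$, the first side yields $\underline{\alpha}^{\mathbf{Smooth}}_{\varphi}(\underline{\mathbf{T}}^{W}_{\mathbf{Mf}}\underline{M})$, while the second yields $\underline{\alpha}^{\mathbf{Smooth}}_{\mathrm{id}_{W} \otimes_{\mathbf{R}} \varphi}(\underline{M})$; equality is precisely the $\underline{M}$-component of the second commutative diagram of the preceding Proposition, the one identifying $\underline{\alpha}_{\mathrm{id}_{W} \otimes_{\mathbf{R}} \varphi}$ with $\underline{\alpha}_{\varphi} \circ \underline{\mathbf{T}}^{W}_{\mathbf{Mf}}$.

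For a morphism $f: \underline{M}_{1} \to \underline{M}_{2}$ in $\mathbf{Mf}$, the two natural transformations $i_{\mathbf{Mf}}(\underline{\mathbf{T}}^{W}_{\mathbf{Mf}} f)$ and $\mathbf{T}^{W}_{\mathbf{Smooth}}(i_{\mathbf{Mf}}(f))$ have to be compared component-wise. The $W'$-component of the latter equals $i_{\mathbf{Mf}}(f)(W \otimes_{\mathbf{R}} W') = \underline{\mathbf{T}}^{W \otimes_{\mathbf{R}} W'}_{\mathbf{Smooth}}(f)$, while that of the former equals $\underline{\mathbf{T}}^{W'}_{\mathbf{Smooth}}(\underline{\mathbf{T}}^{W}_{\mathbf{Mf}} f)$; these are identified by the morphism-level form of the composition rule $\underline{\mathbf{T}}^{W'}_{\mathbf{Mf}}\circ \underline{\mathbf{T}}^{W}_{\mathbf{Mf}} = \underline{\mathbf{T}}^{W \otimes_{\mathbf{R}} W'}_{\mathbf{Mf}}$ applied to $f$. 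Putting these verifications together completes the proof.

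The only genuine obstacle is notational bookkeeping: the argument threads through several tensor orderings ($W \otimes_{\mathbf{R}} W'$ versus $W_{1} \otimes_{\mathbf{R}} W$, etc.) and through two distinct families of natural transformations ($\underline{\alpha}^{\mathbf{Smooth}}$ arising from Weil functors on $\mathbf{Smooth}$, versus its relabelling as the $\mathcal{K}_{\mathbf{Smooth}}$-level datum). One must take care to invoke the \emph{second} rather than the first of the two commutative diagrams in the preceding Proposition, since in $\mathbf{T}^{W}_{\mathbf{Smooth}}$ the tensor factor $W$ appears on the left; with that care taken, no further ingredient is needed.
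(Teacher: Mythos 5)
Your proposal is correct and follows essentially the same route as the paper: both sides are unwound to $\underline{\mathbf{T}}_{\mathbf{Mf}}^{W\otimes_{\mathbf{R}}\cdot}\,\underline{M}=\underline{\mathbf{T}}_{\mathbf{Mf}}^{\cdot}\bigl(\underline{\mathbf{T}}_{\mathbf{Mf}}^{W}\underline{M}\bigr)$ via the composition rule $\underline{\mathbf{T}}_{\mathbf{Mf}}^{W_{2}}\circ\underline{\mathbf{T}}_{\mathbf{Mf}}^{W_{1}}=\underline{\mathbf{T}}_{\mathbf{Mf}}^{W_{1}\otimes_{\mathbf{R}}W_{2}}$, on objects and on morphisms of $\mathbf{Mf}$. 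You merely make explicit the componentwise check at morphisms $\varphi$ of $\mathbf{Weil}_{\mathbf{R}}$ (via the diagram identifying $\underline{\alpha}_{\mathrm{id}_{W}\otimes_{\mathbf{R}}\varphi}$ with $\underline{\alpha}_{\varphi}\circ\underline{\mathbf{T}}_{\mathbf{Mf}}^{W}$), which the paper leaves implicit in that single equality of functors.
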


\begin{proof}
Given an object $\underline{M}$\ in the category $\mathbf{Mf}$, we have
\begin{align*}
&  \left(  \mathbf{T}_{\mathbf{Smooth}}^{W}\circ i_{\mathbf{Mf}}\right)
\left(  \underline{M}\right) \\
&  =i_{\mathbf{Mf}}\left(  \underline{M}\right)  \circ\left(  W\otimes
_{\mathbf{R}}\cdot\right) \\
&  =\underline{\mathbf{T}}_{\mathbf{Mf}}^{W\otimes_{\mathbf{R}}\cdot
}\underline{M}\\
&  =\underline{\mathbf{T}}_{\mathbf{Mf}}^{\cdot}\left(  \underline{\mathbf{T}%
}_{\mathbf{Mf}}^{W}\underline{M}\right) \\
&  =i_{\mathbf{Mf}}\left(  \underline{\mathbf{T}}_{\mathbf{Mf}}^{W}%
\underline{M}\right) \\
&  =\left(  i_{\mathbf{Mf}}\circ\underline{\mathbf{T}}_{\mathbf{Mf}}%
^{W}\right)  \left(  \underline{M}\right)
\end{align*}
Given a morphism
\[
\underline{f}:\underline{M}_{1}\rightarrow\underline{M}_{2}%
\]
in the category $\mathbf{Mf}$, we have
\begin{align*}
&  \left(  \mathbf{T}_{\mathbf{Smooth}}^{W}\circ i_{\mathbf{Mf}}\right)
\left(  \underline{f}\right) \\
&  =i_{\mathbf{Mf}}\left(  \underline{f}\right)  \circ\left(  W\otimes
_{\mathbf{R}}\cdot\right) \\
&  =\underline{\mathbf{T}}_{\mathbf{Mf}}^{W\otimes_{\mathbf{R}}\cdot
}\underline{f}\\
&  =\underline{\mathbf{T}}_{\mathbf{Mf}}^{\cdot}\left(  \underline{\mathbf{T}%
}_{\mathbf{Mf}}^{W}\underline{f}\right) \\
&  =i_{\mathbf{Mf}}\left(  \underline{\mathbf{T}}_{\mathbf{Mf}}^{W}%
\underline{f}\right) \\
&  =\left(  i_{\mathbf{Mf}}\circ\underline{\mathbf{T}}_{\mathbf{Mf}}%
^{W}\right)  \left(  \underline{f}\right)
\end{align*}
\end{proof}

\begin{theorem}
\label{t5.2}Given a morphism $\varphi:W_{1}\rightarrow W_{2}$ in the category
$\mathbf{We}i\mathbf{l}_{\mathbf{R}}$, the diagram
\[%
\begin{array}
[c]{ccc}%
i_{\mathbf{Mf}}\circ\underline{\mathbf{T}}_{\mathbf{Mf}}^{W_{1}} &
\begin{array}
[c]{c}%
i_{\mathbf{Mf}}\circ\underline{\alpha}_{\varphi}^{\mathbf{Mf}}\\
\Rightarrow
\end{array}
& i_{\mathbf{Mf}}\circ\underline{\mathbf{T}}_{\mathbf{Mf}}^{W_{2}}\\
\parallel &  & \parallel\\
\mathbf{T}_{\mathbf{Smooth}}^{W_{1}}\circ i_{\mathbf{Mf}} &
\begin{array}
[c]{c}%
\Rightarrow\\
\alpha_{\varphi}^{\mathbf{Smooth}}\circ i_{\mathbf{Mf}}%
\end{array}
& \mathbf{T}_{\mathbf{Smooth}}^{W_{2}}\circ i_{\mathbf{Mf}}%
\end{array}
\]
is commutative.
\end{theorem}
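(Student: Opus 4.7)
The plan is to check the displayed equality of natural transformations pointwise. Both sides are natural transformations between functors $\mathbf{Mf} \to \mathcal{K}_{\mathbf{Smooth}}$ (whose source and target agree by Theorem~\ref{t5.1}), and since $\mathcal{K}_{\mathbf{Smooth}}$ is itself a functor category with target $\mathbf{Smooth}$, equality of morphisms in $\mathcal{K}_{\mathbf{Smooth}}$ is tested componentwise on $\mathbf{Weil}_{\mathbf{R}}$. So I would fix a manifold $\underline{M} \in \mathbf{Mf}$ and a Weil algebra $W \in \mathbf{Weil}_{\mathbf{R}}$, and compare the two horizontal arrows of the square after evaluation at $(\underline{M},W)$, reducing the claim to an equality of morphisms in $\mathbf{Mf}$.

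Next I would unravel each side. The whiskered transformation $i_{\mathbf{Mf}} \circ \underline{\alpha}_{\varphi}^{\mathbf{Mf}}$, evaluated at $\underline{M}$, is $i_{\mathbf{Mf}}(\underline{\alpha}_{\varphi}^{\mathbf{Mf}}(\underline{M}))$, and its $W$-component is $\underline{\mathbf{T}}_{\mathbf{Mf}}^{W}(\underline{\alpha}_{\varphi}^{\mathbf{Mf}}(\underline{M}))$ by the defining formula for $i_{\mathbf{Smooth}}$ on morphisms. For the other whiskered transformation $\alpha_{\varphi}^{\mathbf{Smooth}} \circ i_{\mathbf{Mf}}$, its value at $\underline{M}$ is $\alpha_{\varphi}^{\mathbf{Smooth}}(i_{\mathbf{Mf}}(\underline{M}))$, whose $W$-component, by the defining formula for $\alpha_{\varphi}^{\mathbf{Smooth}}$, equals $i_{\mathbf{Mf}}(\underline{M})(\varphi \otimes_{\mathbf{R}} \mathrm{id}_{W})$; by the action of $i_{\mathbf{Mf}}(\underline{M})$ on Weil-algebra morphisms, this is $\underline{\alpha}_{\varphi \otimes_{\mathbf{R}} \mathrm{id}_{W}}^{\mathbf{Mf}}(\underline{M})$.

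The claim therefore reduces to the identity
\[
\underline{\mathbf{T}}_{\mathbf{Mf}}^{W}\bigl(\underline{\alpha}_{\varphi}^{\mathbf{Mf}}(\underline{M})\bigr) = \underline{\alpha}_{\varphi \otimes_{\mathbf{R}} \mathrm{id}_{W}}^{\mathbf{Mf}}(\underline{M}),
\]
which is precisely the $\underline{M}$-component of the first commutative square recorded in the preceding classical Weil-functor proposition of Section~3, namely $\underline{\mathbf{T}}_{\mathbf{Mf}}^{W} \circ \underline{\alpha}_{\varphi}^{\mathbf{Mf}} = \underline{\alpha}_{\varphi \otimes_{\mathbf{R}} \mathrm{id}_{W}}^{\mathbf{Mf}}$. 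Invoking that identity closes the argument.

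No serious obstacle is expected; the proof is a diagram-chase followed by appeal to a coherence already established on the $\mathbf{Mf}$ side, in direct analogy with the proof of Theorem~\ref{t5.1}. The only real nuisance is notational hygiene: keeping the underlined \emph{old-kingdom} symbols $\underline{\mathbf{T}}^{W}$, $\underline{\alpha}_{\varphi}$ carefully separated from their unadorned \emph{new-kingdom} counterparts $\mathbf{T}^{W}$, $\alpha_{\varphi}$, and respecting the left/right whiskering conventions in the natural-transformation compositions.
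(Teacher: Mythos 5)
Your proof is correct and follows essentially the same route as the paper's: both evaluate the whiskered natural transformations at an object $\underline{M}$ of $\mathbf{Mf}$, unravel the definitions of $i_{\mathbf{Mf}}$ and $\alpha_{\varphi}^{\mathbf{Smooth}}$, and reduce the claim to the classical coherence $\underline{\mathbf{T}}_{\mathbf{Mf}}^{W}\circ\underline{\alpha}_{\varphi}^{\mathbf{Mf}}=\underline{\alpha}_{\varphi\otimes_{\mathbf{R}}\mathrm{id}_{W}}^{\mathbf{Mf}}$ recorded in the Section~3 proposition. If anything, your write-up is a bit more explicit than the paper's, which performs the same computation with the Weil-algebra component suppressed as ``$\cdot$''.
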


\begin{proof}
Given an object\ $\underline{M}$ in the category $\mathbf{Mf}$, we have
\begin{align*}
&  \left(  i_{\mathbf{Mf}}\circ\underline{\alpha}_{\varphi}^{\mathbf{Mf}%
}\right)  \left(  \underline{M}\right) \\
&  =i_{\mathbf{Mf}}\left(  \underline{\alpha}_{\varphi}^{\mathbf{Mf}}\left(
\underline{M}\right)  \right) \\
&  =\underline{\mathbf{T}}_{\mathbf{Mf}}^{\cdot}\left(  \underline{\alpha
}_{\varphi}^{\mathbf{Mf}}\left(  \underline{M}\right)  \right) \\
&  =\underline{\alpha}_{\varphi}^{\mathbf{Mf}}\left(  \underline{\mathbf{T}%
}_{\mathbf{Mf}}^{\cdot}\left(  \underline{M}\right)  \right) \\
&  =\alpha_{\varphi}^{\mathbf{Smooth}}\left(  i_{\mathbf{Mf}}\left(
\underline{M}\right)  \right) \\
&  =\left(  \alpha_{\varphi}^{\mathbf{Smooth}}\circ i_{\mathbf{Mf}}\right)
\left(  \underline{M}\right)
\end{align*}
\end{proof}

\section{\label{s7}Microlinearity}

\begin{definition}
\label{d7.1}Given a category $\mathcal{K}$\ endowed with a functor
$\mathbf{T}^{W}:\mathcal{K}\rightarrow\mathcal{K}$\ for each object $W$ in the
category $\mathbf{We}i\mathbf{l}_{\mathbf{R}}$ and a natural transformation
$\alpha_{\varphi}:\mathbf{T}^{W_{1}}\Rightarrow\mathbf{T}^{W_{2}}$\ for each
morphism $\varphi:W_{1}\rightarrow W_{2}$\ in the category $\mathbf{We}%
i\mathbf{l}_{\mathbf{R}}$, an object $M$\ in the category $\mathcal{K}$\ is
called \underline{microlinear} if any limit diagram $\mathcal{D}$\ in the
category $\mathbf{We}i\mathbf{l}_{\mathbf{R}}$\ makes the diagram
$\mathbf{T}^{\mathcal{D}}M$\ a limit diagram in the category $\mathcal{K}$,
where the diagram $\mathbf{T}^{\mathcal{D}}M$\ consists of objects
\[
\mathbf{T}^{W}M
\]
for any object $W$\ in the diagram $\mathcal{D}$\ and morphisms
\[
\alpha_{\varphi}\left(  M\right)  :\mathbf{T}^{W_{1}}M\rightarrow
\mathbf{T}^{W_{2}}M
\]
for any morphism $\varphi:W_{1}\rightarrow W_{2}$\ in the diagram
$\mathcal{D}$.
\end{definition}

\begin{proposition}
\label{t7.1}Every manifold as an object in the category $\mathbf{Smooth}$\ is microlinear.
\end{proposition}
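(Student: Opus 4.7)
The plan is to reduce microlinearity of an arbitrary manifold $\underline{M}$ to the case $\underline{M}=\mathbf{R}^{n}$, and to dispatch the Euclidean case by a direct computation. For $\underline{M}=\mathbf{R}$ the earlier structural proposition gives $\underline{\mathbf{T}}_{\mathbf{Mf}}^{W}(\mathbf{R})=W$ and $\underline{\alpha}_{\varphi}^{\mathbf{Mf}}(\mathbf{R})=\varphi$, so the diagram $\underline{\mathbf{T}}^{\mathcal{D}}\mathbf{R}$ is nothing other than the image of $\mathcal{D}$ under the forgetful functor from finite-dimensional $\mathbf{R}$-algebras into $\mathbf{Smooth}$. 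Since limits in $\mathbf{Weil}_{\mathbf{R}}$ are computed on the underlying real vector spaces and this forgetful functor preserves finite limits, $\mathbf{R}$ is microlinear. Finite-product preservation then supplies $\underline{\mathbf{T}}^{W}(\mathbf{R}^{n})\cong W^{n}$, and as $n$-fold Cartesian powers commute with limits, $\mathbf{R}^{n}$ is microlinear too.

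For a general manifold $\underline{M}$, I would fix a smooth atlas $\{U_{i}\}$ with each $U_{i}$ diffeomorphic to an open subset of $\mathbf{R}^{n}$ and invoke the classical theory of Weil functors on $\mathbf{Mf}$ recalled in Chapter VIII of \cite{kol} and \S 31 of \cite{kri}: $\underline{\mathbf{T}}^{W}$ is compatible with open embeddings, and $\underline{\mathbf{T}}^{W}\underline{M}$ is assembled from the $\underline{\mathbf{T}}^{W}U_{i}$ by the very cocycle of transition maps that glues $\underline{M}$ from the $U_{i}$. The microlinearity already established for each $U_{i}$ (as an open subset of $\mathbf{R}^{n}$, hence of $\mathbf{R}^{n}$ itself) should then glue to a limit cone on $\underline{M}$, yielding microlinearity in the sense of Definition~\ref{d7.1}.

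The main obstacle is the gluing step: one has to verify that forming the limit in $\mathbf{Smooth}$ indexed by the Weil-algebra diagram $\mathcal{D}$ commutes with the chart-wise construction of $\underline{\mathbf{T}}^{W}\underline{M}$. This rests on the sheaf-like behaviour of $\mathbf{Smooth}$ on open covers of manifolds together with the fact, emphasised in the Czech school's treatment, that transversal pullbacks in $\mathbf{Mf}$ are preserved by every Weil functor; granted these two ingredients, local microlinearity propagates to the global manifold and the proposition follows.
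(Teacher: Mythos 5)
Your proof takes essentially the same route as the paper's: establish microlinearity of $\mathbf{R}^{n}$ by the explicit computation $\underline{\mathbf{T}}^{W}\left(\mathbf{R}^{n}\right)=W^{n}$ and $\underline{\alpha}_{\varphi}\left(\mathbf{R}^{n}\right)=\varphi^{n}$, pass to open subsets, and glue over an atlas. The paper's own proof is a three-step sketch that likewise leaves the open-subset and chart-gluing details to the reader, so your proposal matches it in both strategy and level of detail (and is slightly more explicit about why the Euclidean case works).
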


\begin{proof}
This can be established in three steps.

\begin{enumerate}
\item  The first step is to show that $\mathbf{R}^{n}$ is micorlinear for any
natural number $n$, which follows easily from
\[
\underline{\mathbf{T}}_{\mathbf{Mf}}^{W}\mathbf{R}^{n}=\underline{\mathbf{T}%
}_{\mathbf{Smooth}}^{W}\mathbf{R}^{n}=W^{n}%
\]
and
\[
\underline{\alpha}_{\varphi}^{\mathbf{Mf}}\left(  \mathbf{R}^{n}\right)
=\underline{\alpha}_{\varphi}^{\mathbf{Smooth}}\left(  \mathbf{R}^{n}\right)
=\varphi^{n}%
\]
for any morphism $\varphi:W_{1}\rightarrow W_{2}$\ in the category
$\mathbf{Weil}_{\mathbf{R}}$.

\item  The second step is to show that any open subset of $\mathbf{R}^{n}$\ is
microlinear in homage to the result in the first step.

\item  The third step is to establish the desired result by remarking that a
smooth manifold is no other than an overlapping family of open subsets of
$\mathbf{R}^{n}$.
\end{enumerate}

The details can safely be left to the reader.
\end{proof}

\begin{theorem}
\label{t7.2}The embedding
\[
i_{\mathbf{Smooth}}:\mathbf{Smooth}\rightarrow\mathcal{K}_{\mathbf{Smooth}}%
\]
maps smooth manifolds to microlinear objects in the category $\mathcal{K}%
_{\mathbf{Smooth}}$.
\end{theorem}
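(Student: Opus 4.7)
The aim is to reduce microlinearity of $i_{\mathbf{Smooth}}(\underline{M})$ in $\mathcal{K}_{\mathbf{Smooth}}$ to microlinearity of $\underline{M}$ in $\mathbf{Smooth}$, which is already given by Proposition \ref{t7.1}. First I would fix a smooth manifold $\underline{M}$ and a limit diagram $\mathcal{D} = (W_{i}, \varphi_{ij})$ in $\mathbf{Weil}_{\mathbf{R}}$; the task is to show that $\mathbf{T}_{\mathbf{Smooth}}^{\mathcal{D}}(i_{\mathbf{Mf}}\underline{M})$ is a limit diagram in $\mathcal{K}_{\mathbf{Smooth}}$.

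Since $\mathcal{K}_{\mathbf{Smooth}}$ is a functor category into the complete category $\mathbf{Smooth}$, limits in $\mathcal{K}_{\mathbf{Smooth}}$ are computed pointwise, so it suffices to verify, for every Weil algebra $V$, that the evaluation of $\mathbf{T}_{\mathbf{Smooth}}^{\mathcal{D}}(i_{\mathbf{Mf}}\underline{M})$ at $V$ is a limit diagram in $\mathbf{Smooth}$. Unwinding the definition of $\mathbf{T}_{\mathbf{Smooth}}^{W_i}$ as precomposition with $W_i \otimes_{\mathbf{R}} \cdot$, and of $\alpha_{\varphi_{ij}}^{\mathbf{Smooth}}$ as whiskering with $\varphi_{ij} \otimes_{\mathbf{R}} \mathrm{id}$, this evaluation is the diagram in $\mathbf{Smooth}$ with objects $\underline{\mathbf{T}}_{\mathbf{Smooth}}^{W_i \otimes_{\mathbf{R}} V}\underline{M}$ and transitions $\underline{\alpha}_{\varphi_{ij} \otimes_{\mathbf{R}} \mathrm{id}_V}^{\mathbf{Smooth}}(\underline{M})$; in other words, it is precisely $\underline{\mathbf{T}}_{\mathbf{Smooth}}^{\mathcal{D} \otimes_{\mathbf{R}} V}\underline{M}$, where $\mathcal{D} \otimes_{\mathbf{R}} V$ denotes the image of $\mathcal{D}$ under the endofunctor $\cdot \otimes_{\mathbf{R}} V$ of $\mathbf{Weil}_{\mathbf{R}}$.

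The key technical point, and essentially the only nontrivial step, is that $\cdot \otimes_{\mathbf{R}} V$ carries the limit diagram $\mathcal{D}$ to another limit diagram $\mathcal{D} \otimes_{\mathbf{R}} V$ in $\mathbf{Weil}_{\mathbf{R}}$. I would justify this by using the fact that every Weil algebra is a finite-dimensional $\mathbf{R}$-vector space, so that $V \cong \mathbf{R}^{\dim V}$ on underlying spaces; tensoring with $V$ is then an exact, limit-preserving operation (on vector spaces it is a finite product, and the multiplicative structure on $W \otimes_{\mathbf{R}} V$ is transported compatibly along the morphisms of $\mathcal{D}$). I expect this to be the main obstacle in writing up the argument cleanly, since it is the one step that goes beyond the abstract categorical setup of the preceding sections.

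Granted this, Proposition \ref{t7.1} applied to $\mathcal{D} \otimes_{\mathbf{R}} V$ shows that $\underline{\mathbf{T}}_{\mathbf{Smooth}}^{\mathcal{D} \otimes_{\mathbf{R}} V}\underline{M}$ is a limit in $\mathbf{Smooth}$, and since this holds for every Weil algebra $V$, the pointwise criterion yields that $\mathbf{T}_{\mathbf{Smooth}}^{\mathcal{D}}(i_{\mathbf{Mf}}\underline{M})$ is a limit diagram in $\mathcal{K}_{\mathbf{Smooth}}$, as required. As a bookkeeping alternative one could first invoke Theorems \ref{t5.1} and \ref{t5.2} to identify $\mathbf{T}_{\mathbf{Smooth}}^{\mathcal{D}} \circ i_{\mathbf{Mf}}$ with $i_{\mathbf{Mf}} \circ \underline{\mathbf{T}}_{\mathbf{Mf}}^{\mathcal{D}}$ at the level of diagrams, but the pointwise route above seems the most direct.
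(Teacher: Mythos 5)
Your overall skeleton---compute limits in $\mathcal{K}_{\mathbf{Smooth}}$ pointwise and reduce to Proposition \ref{t7.1}---is the same as the paper's, and your identification of the diagram evaluated at $V$ (objects $\underline{\mathbf{T}}_{\mathbf{Smooth}}^{W_{i}\otimes_{\mathbf{R}}V}\underline{M}$, morphisms $\underline{\alpha}_{\varphi_{ij}\otimes_{\mathbf{R}}\mathrm{id}_{V}}^{\mathbf{Smooth}}\left(\underline{M}\right)$) is correct. The gap is exactly the step you yourself flagged as the main obstacle: the claim that $\cdot\otimes_{\mathbf{R}}V$ carries a limit diagram in $\mathbf{Weil}_{\mathbf{R}}$ to a limit diagram. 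This is false in general, and the justification via exactness of $\cdot\otimes_{\mathbf{R}}V$ on underlying vector spaces does not apply, because limits in $\mathbf{Weil}_{\mathbf{R}}$ are not computed on underlying vector spaces. Concretely, the product of $W_{1}$ and $W_{2}$ in $\mathbf{Weil}_{\mathbf{R}}$ is the fibered product $W_{1}\times_{\mathbf{R}}W_{2}$ over the augmentations (the full algebra product $W_{1}\times W_{2}$ is not local, hence not a Weil algebra), and a dimension count gives $\dim\left(\left(W_{1}\times_{\mathbf{R}}W_{2}\right)\otimes_{\mathbf{R}}V\right)=\left(\dim W_{1}+\dim W_{2}-1\right)\dim V$, whereas the product $\left(W_{1}\otimes_{\mathbf{R}}V\right)\times_{\mathbf{R}}\left(W_{2}\otimes_{\mathbf{R}}V\right)$ has dimension $\left(\dim W_{1}+\dim W_{2}\right)\dim V-1$; these agree only when $\dim V=1$. (Even more simply, the terminal object $\mathbf{R}$ is sent to $V$.) This failure is precisely why the classical microlinearity literature restricts to cones that remain limit cones after tensoring with every Weil algebra; with the definition used in this paper your lemma is not available, so Proposition \ref{t7.1} cannot be applied to $\mathcal{D}\otimes_{\mathbf{R}}V$.

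The paper's proof sidesteps the lemma entirely: using $W_{i}\otimes_{\mathbf{R}}V\cong V\otimes_{\mathbf{R}}W_{i}$, the composition law $\underline{\mathbf{T}}_{\mathbf{Mf}}^{V\otimes_{\mathbf{R}}W_{i}}=\underline{\mathbf{T}}_{\mathbf{Mf}}^{W_{i}}\circ\underline{\mathbf{T}}_{\mathbf{Mf}}^{V}$, and the accompanying identity $\underline{\alpha}_{\varphi_{ij}\otimes_{\mathbf{R}}\mathrm{id}_{V}}^{\mathbf{Mf}}\left(\underline{M}\right)=\underline{\alpha}_{\varphi_{ij}}^{\mathbf{Mf}}\left(\underline{\mathbf{T}}_{\mathbf{Mf}}^{V}\underline{M}\right)$ from the structure of Weil functors on $\mathbf{Mf}$, the evaluated diagram is literally $\underline{\mathbf{T}}^{\mathcal{D}}$ applied to the smooth manifold $\underline{\mathbf{T}}_{\mathbf{Mf}}^{V}\underline{M}$. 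Proposition \ref{t7.1}, applied to that manifold and to the \emph{original} limit diagram $\mathcal{D}$, then yields the conclusion. In short, the fix is to keep $\mathcal{D}$ fixed and change the manifold, rather than to keep the manifold and change the diagram; your write-up needs this replacement (or an explicit restriction of Definition \ref{d7.1} to tensor-stable limit cones) before it is correct.
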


\begin{proof}
Let $\mathcal{D}$\ be a limit diagram in the category $\mathbf{Weil}%
_{\mathbf{R}}$. Let $\underline{M}$\ be a smooth manifold in the category
$\mathbf{Smooth}$. Given an object $W$\ in the category $\mathbf{Weil}%
_{\mathbf{R}}$, the diagram $\left(  \mathbf{T}_{\mathbf{Smooth}}%
^{\mathcal{D}}\left(  i_{\mathbf{Smooth}}\left(  \underline{M}\right)
\right)  \right)  \left(  W\right)  $, which consists of objects
\[
\underline{\mathbf{T}}_{\mathbf{Mf}}^{W^{\prime}\otimes_{\mathbf{R}}%
W}\underline{M}=\underline{\mathbf{T}}_{\mathbf{Mf}}^{W\otimes_{\mathbf{R}%
}W^{\prime}}\underline{M}=\underline{\mathbf{T}}_{\mathbf{Mf}}^{W^{\prime}%
}\left(  \underline{\mathbf{T}}_{\mathbf{Mf}}^{W}\underline{M}\right)
\]
for any object $W^{\prime}$\ in the category $\mathbf{Weil}_{\mathbf{R}}$\ and
morphisms
\[%
\begin{array}
[c]{ccc}%
\underline{\mathbf{T}}_{\mathbf{Mf}}^{W_{1}\otimes_{\mathbf{R}}W}\underline{M}%
& \underrightarrow{\underline{\alpha}_{\varphi\otimes_{\mathbf{R}}%
\mathrm{id}_{W}}^{\mathbf{Mf}}\left(  \underline{M}\right)  } & \underline
{\mathbf{T}}_{\mathbf{Mf}}^{W_{2}\otimes_{\mathbf{R}}W}\underline{M}\\
\parallel &  & \parallel\\
\underline{\mathbf{T}}_{\mathbf{Mf}}^{W\otimes_{\mathbf{R}}W_{1}}\underline{M}%
&  & \underline{\mathbf{T}}_{\mathbf{Mf}}^{W\otimes_{\mathbf{R}}W_{2}%
}\underline{M}\\
\parallel &  & \parallel\\
\underline{\mathbf{T}}_{\mathbf{Mf}}^{W_{1}}\left(  \underline{\mathbf{T}%
}_{\mathbf{Mf}}^{W}\underline{M}\right)  & \overrightarrow{\underline{\alpha
}_{\varphi}^{\mathbf{Mf}}\left(  \underline{\mathbf{T}}_{\mathbf{Mf}}%
^{W}\underline{M}\right)  } & \underline{\mathbf{T}}_{\mathbf{Mf}}^{W_{2}%
}\left(  \underline{\mathbf{T}}_{\mathbf{Mf}}^{W}\underline{M}\right)
\end{array}
\]
for any morphism $\varphi:W_{1}\rightarrow W_{2}$\ in the category
$\mathbf{Weil}_{\mathbf{R}}$, is a limit diagram in the category
$\mathbf{Smooth}$, because $\underline{\mathbf{T}}_{\mathbf{Mf}}^{W}%
\underline{M}$\ is a microlinear object in the category $\mathbf{Smooth}$\ in
homage to Proposition \ref{t7.1}. Therefore the diagram $\mathbf{T}%
_{\mathbf{Smooth}}^{\mathcal{D}}\left(  i_{\mathbf{Smooth}}\left(
\underline{M}\right)  \right)  $\ is a limit diagram in the category
$\mathcal{K}_{\mathbf{Smooth}}$ thanks to Theorem 7.5.2 and Remarks 7.5.3 in
\cite{sch}.
\end{proof}

\section{Transversal Limits}

\begin{definition}
A cone $\mathcal{D}$\ in the category $\mathbf{Smooth}$\ is called a
\underline{transversal limit diagram} if the diagram $\mathbf{T}%
_{\mathbf{Smooth}}^{W}\mathcal{D}$\ is a limit diagram for any object $W$\ in
the category $\mathbf{Weil}_{\mathbf{R}}$. In this case, the vertex of the
cone is called a \underline{transversal limit}.
\end{definition}

It is easy to see that

\begin{proposition}
\label{t6.1}A transversal limit diagram is a limit diagram, so that a
transversal limit is a limit.
\end{proposition}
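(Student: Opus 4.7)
The plan is to observe that this proposition is essentially an immediate corollary of the fact (recorded in the preceding propositions) that the Weil functor associated with $W=\mathbf{R}$ is the identity, so the definition of transversal limit diagram already contains the statement of the proposition as a special case.

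More concretely, I would first pin down the ambient functor $\underline{\mathbf{T}}_{\mathbf{Smooth}}^{W}:\mathbf{Smooth}\rightarrow\mathbf{Smooth}$ meant in the definition of transversal limit (or equivalently the functor $\mathbf{T}_{\mathbf{Smooth}}^{W}$ after composing with $i_{\mathbf{Smooth}}$, which is consistent by Theorem \ref{t5.1}). Then I would specialize to $W=\mathbf{R}$. Using the finite presentation $\mathbf{R}=C^{\infty}(\mathbf{R}^{0})/\{0\}$, the equivalence relation $\thicksim_{\mathbf{R}}$ collapses to equality of maps $\mathbf{R}^{0}\rightarrow X$, so $\underline{\mathbf{T}}_{\mathbf{Smooth}}^{\mathbf{R}}\underline{M}\cong\underline{M}$ naturally, exactly as in the analogous statement for $\mathbf{Mf}$ already recorded earlier. (Alternatively, invoke directly the statement from the New Kingdom proposition that $\mathbf{T}_{\mathbf{Smooth}}^{\mathbf{R}}$ is the identity functor on $\mathcal{K}_{\mathbf{Smooth}}$, which transports along $i_{\mathbf{Smooth}}$.)

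Having identified $\underline{\mathbf{T}}_{\mathbf{Smooth}}^{\mathbf{R}}$ with the identity, I would conclude: by definition, a transversal limit diagram $\mathcal{D}$ has the property that $\underline{\mathbf{T}}_{\mathbf{Smooth}}^{W}\mathcal{D}$ is a limit diagram for \emph{every} object $W$ of $\mathbf{Weil}_{\mathbf{R}}$; taking $W=\mathbf{R}$ yields that $\mathcal{D}$ itself is a limit diagram, and therefore its vertex is a limit.

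There is really no obstacle here; the only subtlety worth addressing is the compatibility between the functor $\underline{\mathbf{T}}_{\mathbf{Smooth}}^{W}$ acting on cones in $\mathbf{Smooth}$ and the functor $\mathbf{T}_{\mathbf{Smooth}}^{W}$ acting in $\mathcal{K}_{\mathbf{Smooth}}$, but this is precisely the content of Theorem \ref{t5.1} (and the straightforward extension of its argument from $\mathbf{Mf}$ to all of $\mathbf{Smooth}$), so no further work is required.
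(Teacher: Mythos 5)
Your proposal is correct and coincides with the paper's own proof, which simply observes that $\mathbf{T}_{\mathbf{Smooth}}^{\mathbf{R}}\mathcal{D}=\mathcal{D}$ for any cone $\mathcal{D}$ and concludes immediately by taking $W=\mathbf{R}$ in the definition of transversal limit diagram. Your additional remarks on the presentation $\mathbf{R}=C^{\infty}(\mathbf{R}^{0})/\{0\}$ and on the compatibility supplied by Theorem \ref{t5.1} are harmless elaborations of the same one-line argument.
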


\begin{proof}
Since
\[
\mathbf{T}_{\mathbf{Smooth}}^{\mathbf{R}}\mathcal{D}=\mathcal{D}%
\]
for any cone $\mathcal{D}$\ in the category $\mathbf{Smooth}$, the desired
conclusion follows immediately.
\end{proof}

What makes the notion of a transversal limit significant is the following theorem.

\begin{theorem}
\label{t6.2}The embedding
\[
i_{\mathbf{Smooth}}:\mathbf{Smooth}\rightarrow\mathcal{K}_{\mathbf{Smooth}}%
\]
maps transversal limit diagrams in the category $\mathbf{Smooth}$\ to limit
diagrams in the category $\mathcal{K}_{\mathbf{Smooth}}$.
\end{theorem}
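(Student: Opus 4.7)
The plan is to reduce the theorem to the fact that limits in the functor category $\mathcal{K}_{\mathbf{Smooth}}=[\mathbf{Weil}_{\mathbf{R}},\mathbf{Smooth}]$ are computed pointwise --- the same Theorem 7.5.2 and Remarks 7.5.3 of \cite{sch} already invoked in the proof of Theorem \ref{t7.2}. Since $\mathbf{Smooth}$ is assumed to be complete, this pointwise characterization applies to any small diagram shape, so no restriction on the shape of $\mathcal{D}$ is needed.

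First I would unpack $i_{\mathbf{Smooth}}(\mathcal{D})$. If $\mathcal{D}$ is a cone in $\mathbf{Smooth}$ with apex $\underline{M}$ and legs $\underline{p}_{j}:\underline{M}\rightarrow\underline{M}_{j}$ over some diagram $j\mapsto\underline{M}_{j}$, then by the definition of $i_{\mathbf{Smooth}}$ the image $i_{\mathbf{Smooth}}(\mathcal{D})$ is the cone in $\mathcal{K}_{\mathbf{Smooth}}$ with apex $i_{\mathbf{Smooth}}(\underline{M})$ and legs $i_{\mathbf{Smooth}}(\underline{p}_{j})$. Evaluating at any object $W$ of $\mathbf{Weil}_{\mathbf{R}}$ yields the cone with apex $\underline{\mathbf{T}}_{\mathbf{Smooth}}^{W}\underline{M}$ and legs $\underline{\mathbf{T}}_{\mathbf{Smooth}}^{W}\underline{p}_{j}$, which is precisely $\underline{\mathbf{T}}_{\mathbf{Smooth}}^{W}\mathcal{D}$. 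By the hypothesis that $\mathcal{D}$ is a transversal limit diagram, this pointwise cone is a limit in $\mathbf{Smooth}$ for every such $W$.

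Combining these two observations with the pointwise-limit theorem immediately gives that $i_{\mathbf{Smooth}}(\mathcal{D})$ is a limit cone in $\mathcal{K}_{\mathbf{Smooth}}$. I do not expect any substantive obstacle: the argument is essentially a one-line consequence of the definition of \emph{transversal} together with the pointwise computation of limits in a functor category. The only routine bookkeeping is verifying that the naturality-in-$W$ of the legs $i_{\mathbf{Smooth}}(\underline{p}_{j})$ is compatible with the natural-transformation structure coming from the pointwise limit, but this compatibility is exactly the content of the cited theorem from \cite{sch} and requires nothing specific to Weil functors or smooth manifolds.
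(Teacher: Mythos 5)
Your proposal is correct and follows essentially the same route as the paper: the paper's proof is simply the one-line appeal to Theorem 7.5.2 and Remarks 7.5.3 of \cite{sch} (pointwise computation of limits in the functor category $\mathcal{K}_{\mathbf{Smooth}}$), combined with the observation that evaluating $i_{\mathbf{Smooth}}(\mathcal{D})$ at $W$ yields $\underline{\mathbf{T}}_{\mathbf{Smooth}}^{W}\mathcal{D}$, which is a limit by the definition of transversality. You have merely written out the details that the paper leaves implicit.
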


\begin{proof}
This follows directly in homage to Theorem 7.5.2 and Remarks 7.5.3 in
\cite{sch}.
\end{proof}

Now we are going to show that the above embedding preserves vertical Weil
functors, as far as fibered manifolds are concerned. Let us recall the
definition of vertical Weil functor given in \cite{nishi3}.

\begin{definition}
Let us suppose that we are given a left exact category $\mathcal{K}$\ endowed
with a functor $\mathbf{T}^{W}:\mathcal{K}\rightarrow\mathcal{K}$\ for each
object $W$ in the category $\mathbf{We}i\mathbf{l}_{\mathbf{R}}$ and a natural
transformation $\alpha_{\varphi}:\mathbf{T}^{W_{1}}\Rightarrow\mathbf{T}%
^{W_{2}}$\ for each morphism $\varphi:W_{1}\rightarrow W_{2}$\ in the category
$\mathbf{We}i\mathbf{l}_{\mathbf{R}}$. Given a morphism $\pi:E\rightarrow
M$\ in the category $\mathcal{K}$, its \underline{vertical Weil functor}
$\overrightarrow{\mathbf{T}}^{W}\left(  \pi\right)  $\ is defined to be the
equalizer of the parallel morphisms
\[
\mathbf{T}^{W}\left(  E\right)
\begin{array}
[c]{c}%
\underrightarrow{\qquad\qquad\qquad\qquad\mathbf{T}^{W}\left(  \pi\right)
\qquad\qquad\qquad\qquad}\\
\overrightarrow{\mathbf{T}^{W}\left(  \pi\right)  }\,\mathbf{T}^{W}\left(
M\right)  \,\overrightarrow{\alpha_{W\rightarrow\mathbf{R}}\left(  M\right)
}\,\mathbf{T}^{\mathbf{R}}\left(  M\right)  \,\overrightarrow{\alpha
_{\mathbf{R}\rightarrow W}\left(  M\right)  }%
\end{array}
\mathbf{T}^{W}\left(  M\right)
\]
\end{definition}

\begin{lemma}
\label{t6.3}The equalizer of the above diagram in the category
$\mathbf{Smooth}$\ is transversal, as far as $\pi:E\rightarrow M$ is a fibered
manifold in the sense of 2.4 in \cite{kol}.
\end{lemma}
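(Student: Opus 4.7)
The plan is to identify the equalizer with a pullback that is well-behaved under Weil functors because $\pi$ is a submersion. Concretely, the two parallel arrows in the display coincide after post-composition with the projection to $M$ (since $\alpha_{\mathbf{R}\to W}(M)$ is the zero section over $M$), so the equalizer can be rewritten as the pullback
\[
\overrightarrow{\underline{\mathbf{T}}}{}^{W}_{\mathbf{Mf}}(\pi)\cong \underline{\mathbf{T}}^{W}_{\mathbf{Mf}}(E)\times_{\underline{\mathbf{T}}^{W}_{\mathbf{Mf}}(M)}M,
\]
where $M\hookrightarrow \underline{\mathbf{T}}^{W}_{\mathbf{Mf}}(M)$ is the zero section, coming from the unique algebra map $\mathbf{R}\to W$. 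Thus the first step is to rephrase the equalizer as this fibered product.

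Next I would invoke the standard fact from the Czech school (Chapter VIII of \cite{kol}) that Weil functors take fibered manifolds to fibered manifolds: since $\pi:E\to M$ is a surjective submersion, $\underline{\mathbf{T}}^{W}_{\mathbf{Mf}}(\pi):\underline{\mathbf{T}}^{W}_{\mathbf{Mf}}(E)\to \underline{\mathbf{T}}^{W}_{\mathbf{Mf}}(M)$ is again a surjective submersion, and the zero section is smooth, so this pair is transversal in $\mathbf{Mf}$ and the pullback exists as a smooth manifold, namely $\overrightarrow{\underline{\mathbf{T}}}{}^{W}_{\mathbf{Mf}}(\pi)$. To prove the lemma it now suffices to show that applying $\underline{\mathbf{T}}^{W'}_{\mathbf{Smooth}}$ to this pullback for every Weil algebra $W'$ yields a pullback in $\mathbf{Smooth}$.

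For this I would use two ingredients already available: first, Proposition 5 gives $\underline{\mathbf{T}}^{W'}_{\mathbf{Mf}}\circ\underline{\mathbf{T}}^{W}_{\mathbf{Mf}} = \underline{\mathbf{T}}^{W\otimes_{\mathbf{R}}W'}_{\mathbf{Mf}}$ and the naturality of the $\underline{\alpha}$'s in the two tensor slots, so that $\underline{\mathbf{T}}^{W'}_{\mathbf{Mf}}$ sends the zero section $M\hookrightarrow \underline{\mathbf{T}}^{W}_{\mathbf{Mf}}(M)$ to the zero section $\underline{\mathbf{T}}^{W'}_{\mathbf{Mf}}(M)\hookrightarrow \underline{\mathbf{T}}^{W\otimes_{\mathbf{R}}W'}_{\mathbf{Mf}}(M)$; second, the classical result (Proposition 35.15 / Theorem 37.13 in \cite{kri} or the corresponding statement in VIII of \cite{kol}) that Weil functors preserve pullbacks of fibered manifolds along arbitrary smooth maps, and in particular preserve the transversal pullback constructed in the previous paragraph. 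Combining these, $\underline{\mathbf{T}}^{W'}_{\mathbf{Mf}}$ sends $\overrightarrow{\underline{\mathbf{T}}}{}^{W}_{\mathbf{Mf}}(\pi)$ to the pullback of $\underline{\mathbf{T}}^{W\otimes_{\mathbf{R}}W'}_{\mathbf{Mf}}(\pi)$ along the zero section of $\underline{\mathbf{T}}^{W'}_{\mathbf{Mf}}(M)$ in $\underline{\mathbf{T}}^{W\otimes_{\mathbf{R}}W'}_{\mathbf{Mf}}(M)$, which by the first step is exactly the equalizer defining $\overrightarrow{\underline{\mathbf{T}}}{}^{W}_{\mathbf{Mf}}$ applied to $\underline{\mathbf{T}}^{W'}_{\mathbf{Mf}}(\pi)$; hence the original equalizer diagram is transversal.

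The step I expect to be the genuine obstacle is the second ingredient above: verifying, in the convenient-category setting $\mathbf{Smooth}$ rather than just in $\mathbf{Mf}$, that $\underline{\mathbf{T}}^{W'}_{\mathbf{Smooth}}$ preserves the pullback along the zero section when $\pi$ is a fibered manifold. Inside $\mathbf{Mf}$ this is classical, but one must check either that the pullback computed in $\mathbf{Mf}$ agrees with the one in $\mathbf{Smooth}$ (which follows from $\mathbf{Mf}$ being a full subcategory closed under transversal pullbacks) or argue locally: by pulling back along a submersion one may reduce to the product case $E=M\times F$, where $\underline{\mathbf{T}}^{W}$ on the product splits using preservation of finite products, making the equalizer an obvious product of $M$ with the infinitesimal fiber $\underline{\mathbf{T}}^{W}_{\mathbf{Mf}}(F)_{0}$, and then transversality becomes the preservation of finite products by every Weil functor, which is granted by Proposition 5.
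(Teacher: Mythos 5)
Your proposal is correct in outline but takes a genuinely different route from the paper. The paper proves Lemma \ref{t6.3} by the same three-step local-to-global scheme as Proposition \ref{t7.1}: first the linear model $E=\mathbf{R}^{m+n}$, $M=\mathbf{R}^{m}$ with the canonical projection, where the equalizer is computed explicitly as the canonical injection $\mathbf{R}^{m}\times W^{n}\rightarrow W^{m+n}$ and its transversality is checked by hand; then the case $U\times V\rightarrow U$ of open subsets; then the general fibered manifold by gluing, a surjective submersion being locally of that form. You instead observe that the equalizer of $\mathbf{T}^{W}(\pi)$ and $\alpha_{\mathbf{R}\rightarrow W}(M)\circ\alpha_{W\rightarrow\mathbf{R}}(M)\circ\mathbf{T}^{W}(\pi)$ is the pullback of $\mathbf{T}^{W}(\pi)$ along the split monomorphism $\alpha_{\mathbf{R}\rightarrow W}(M)$ (the zero section) --- a correct and clean reformulation, since the equalizer of $f$ and $s\circ r\circ f$ with $r\circ s=\mathrm{id}$ is precisely the pullback of $s$ along $f$ --- and then reduce transversality to the classical facts that Weil functors send surjective submersions to surjective submersions and preserve pullbacks along submersions, combined with $\underline{\mathbf{T}}_{\mathbf{Mf}}^{W^{\prime}}\circ\underline{\mathbf{T}}_{\mathbf{Mf}}^{W}=\underline{\mathbf{T}}_{\mathbf{Mf}}^{W\otimes_{\mathbf{R}}W^{\prime}}$. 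Your route buys economy (the only computation is outsourced to \cite{kol} and \cite{kri}); the paper's route buys self-containment and a uniform template shared with Proposition \ref{t7.1}. Two caveats. First, your closing reduction to ``the product case $E=M\times F$'' overstates what a fibered manifold in the sense of 2.4 of \cite{kol} provides: it is only locally a product $U\times V\rightarrow U$ over a chart, not globally a product over $M$, so the local argument must be run chartwise as in the paper. Second, the obstacle you flag is genuine: transversality is defined by requiring a limit in $\mathbf{Smooth}$, not merely in the full subcategory $\mathbf{Mf}$, so one must still verify that the transversal pullback of manifolds, with its submanifold structure, is the categorical pullback in $\mathbf{Smooth}$; the paper does not discharge this either, ending with ``the details can safely be left to the reader,'' so your explicit acknowledgment of the gap does not put you behind the published argument.
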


\begin{proof}
The proof is similar to that in Proposition \ref{t7.1}.

\begin{enumerate}
\item  In case that $E=\mathbf{R}^{m+n}$, $M=\mathbf{R}^{m}$, and $\pi$ is the
canonical projection, the equalizer is the canonical injection
\[
\mathbf{R}^{m}\times W^{n}\rightarrow W^{m+n}=\mathbf{T}^{W}\left(  E\right)
\]
and it is easy to see that it is transversal.

\item  Then we prove the statement in case that $E=U\times V$, $M=U$, and
$\pi$ is the canonical projection, where $U$ is an open subset of
$\mathbf{R}^{m}$, and $V$ is an open subset of $\mathbf{R}^{n}$.

\item  The desired statement in full generality follows from the above case by
remarking that the fiber bundle $\pi:E\rightarrow M$\ is no other than an
overlapping family of such special cases.
\end{enumerate}

The details can safely be left to the reader.
\end{proof}

\begin{theorem}
\label{t6.4}Given an object $W$ in the category $\mathbf{We}i\mathbf{l}%
_{\mathbf{R}}$ and a fibered manifold $\pi:E\rightarrow M$\ in the category
$\mathbf{Smooth}$, we have
\[
i_{\mathbf{Smooth}}\left(  \overrightarrow{\underline{\mathbf{T}}%
}_{\mathbf{Smooth}}^{W}\left(  \pi\right)  \right)  =\overrightarrow
{\mathbf{T}}_{\mathbf{Smooth}}^{W}\left(  i_{\mathbf{Smooth}}\left(
\pi\right)  \right)
\]
\end{theorem}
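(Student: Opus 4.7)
Unfolding the definition, $\overrightarrow{\underline{\mathbf{T}}}_{\mathbf{Smooth}}^{W}(\pi)$ is the vertex of an equalizer cone in $\mathbf{Smooth}$ of the parallel pair
\[
\underline{\mathbf{T}}_{\mathbf{Smooth}}^{W}(E)\rightrightarrows\underline{\mathbf{T}}_{\mathbf{Smooth}}^{W}(M)
\]
built from $\underline{\mathbf{T}}_{\mathbf{Smooth}}^{W}(\pi)$ and from its composition with the round-trip $\underline{\alpha}_{\mathbf{R}\to W}^{\mathbf{Smooth}}(M)\circ\underline{\alpha}_{W\to\mathbf{R}}^{\mathbf{Smooth}}(M)$ through $\underline{\mathbf{T}}_{\mathbf{Smooth}}^{\mathbf{R}}(M)=M$. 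Similarly, $\overrightarrow{\mathbf{T}}_{\mathbf{Smooth}}^{W}(i_{\mathbf{Smooth}}(\pi))$ is the equalizer in $\mathcal{K}_{\mathbf{Smooth}}$ of the analogous parallel pair formed over $i_{\mathbf{Smooth}}(E)$, $i_{\mathbf{Smooth}}(M)$ and $i_{\mathbf{Smooth}}(\pi)$. The plan is to show that $i_{\mathbf{Smooth}}$, applied to the first equalizer cone, delivers precisely the second equalizer cone; by the uniqueness of equalizers this gives the asserted equality.

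For the first half of the plan I would invoke Lemma \ref{t6.3}, which says precisely that the equalizer defining $\overrightarrow{\underline{\mathbf{T}}}_{\mathbf{Smooth}}^{W}(\pi)$ is a transversal limit in $\mathbf{Smooth}$ (this is the only point at which the hypothesis that $\pi$ is a fibered manifold enters). Theorem \ref{t6.2} then applies and ensures that the image of this cone under $i_{\mathbf{Smooth}}$ is a genuine limit cone in $\mathcal{K}_{\mathbf{Smooth}}$.

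For the second half I need to identify the image cone with the cone defining $\overrightarrow{\mathbf{T}}_{\mathbf{Smooth}}^{W}(i_{\mathbf{Smooth}}(\pi))$. This is an intertwining check: Theorem \ref{t5.1}, whose proof extends verbatim from $i_{\mathbf{Mf}}$ to $i_{\mathbf{Smooth}}$, supplies the equality $i_{\mathbf{Smooth}}\circ\underline{\mathbf{T}}_{\mathbf{Smooth}}^{W}=\mathbf{T}_{\mathbf{Smooth}}^{W}\circ i_{\mathbf{Smooth}}$ on both objects and arrows (in particular, on $\pi$, $E$, $M$), while Theorem \ref{t5.2} supplies $i_{\mathbf{Smooth}}(\underline{\alpha}_{\varphi}^{\mathbf{Smooth}}(X))=\alpha_{\varphi}^{\mathbf{Smooth}}(i_{\mathbf{Smooth}}(X))$ applied to the two canonical morphisms $W\to\mathbf{R}$ and $\mathbf{R}\to W$ of Weil algebras with $X=M$. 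Splicing these identifications together shows that $i_{\mathbf{Smooth}}$ carries the equalizer diagram defining the left-hand side of the theorem to the equalizer diagram defining the right-hand side, and combining with the first half yields the theorem. The only nontrivial step in this whole argument is Lemma \ref{t6.3}; everything beyond that is assembly, and the main thing to watch is that one tracks the full cone (not just its vertex) through $i_{\mathbf{Smooth}}$ so that Theorems \ref{t5.1} and \ref{t5.2} can be applied leg by leg.
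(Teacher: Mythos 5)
Your proposal is correct and follows essentially the same route as the paper's own proof: identify the two equalizer diagrams via Theorems \ref{t5.1} and \ref{t5.2}, then use Lemma \ref{t6.3} together with Theorem \ref{t6.2} to see that $i_{\mathbf{Smooth}}$ preserves the (transversal) equalizer. Your explicit remarks about extending Theorems \ref{t5.1} and \ref{t5.2} from $i_{\mathbf{Mf}}$ to $i_{\mathbf{Smooth}}$ and about tracking the whole cone rather than just its vertex merely make precise what the paper leaves implicit.
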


\begin{proof}
In homage to Theorems \ref{t5.1} and \ref{t5.2}, the functor
$i_{\mathbf{Smooth}}$\ maps the diagram
\[
\underline{\mathbf{T}}_{\mathbf{Smooth}}^{W}\left(  E\right)
\begin{array}
[c]{c}%
\underrightarrow{\qquad\qquad\qquad\qquad\qquad\qquad\underline{\mathbf{T}%
}_{\mathbf{Smooth}}^{W}\left(  \pi\right)  \qquad\qquad\qquad\qquad\qquad}\\
\overrightarrow{\underline{\mathbf{T}}_{\mathbf{Smooth}}^{W}\left(
\pi\right)  }\,\underline{\mathbf{T}}_{\mathbf{Smooth}}^{W}\left(  M\right)
\,\overrightarrow{\underline{\alpha}_{W\rightarrow\mathbf{R}}^{\mathbf{Smooth}%
}\left(  M\right)  }\,\underline{\mathbf{T}}_{\mathbf{Smooth}}^{\mathbf{R}%
}\left(  M\right)  \,\overrightarrow{\underline{\alpha}_{\mathbf{R}\rightarrow
W}^{\mathbf{Smooth}}\left(  M\right)  }%
\end{array}
\underline{\mathbf{T}}_{\mathbf{Smooth}}^{W}\left(  M\right)
\]
in the category $\mathbf{Smooth}$\ into the diagram
\[
\mathbf{T}_{\mathbf{Smooth}}^{W}\left(  i_{\mathbf{Smooth}}\left(  E\right)
\right)
\begin{array}
[c]{c}%
\underrightarrow{\qquad\qquad\qquad\mathbf{T}_{\mathbf{Smooth}}^{W}\left(
i_{\mathbf{Smooth}}\left(  \pi\right)  \right)  \qquad\qquad\qquad}\\%
\begin{array}
[c]{c}%
\overrightarrow{\mathbf{T}_{\mathbf{Smooth}}^{W}\left(  i_{\mathbf{Smooth}%
}\left(  \pi\right)  \right)  }\,\mathbf{T}_{\mathbf{Smooth}}^{W}\left(
i_{\mathbf{Smooth}}\left(  M\right)  \right)  \,\\
\overrightarrow{\alpha_{W\rightarrow\mathbf{R}}^{\mathbf{Smooth}}\left(
i_{\mathbf{Smooth}}\left(  M\right)  \right)  }\,\mathbf{T}_{\mathbf{Smooth}%
}^{\mathbf{R}}\left(  i_{\mathbf{Smooth}}\left(  M\right)  \right)  \,\\
\overrightarrow{\alpha_{\mathbf{R}\rightarrow W}^{\mathbf{Smooth}}\left(
i_{\mathbf{Smooth}}\left(  M\right)  \right)  }%
\end{array}
\end{array}
\mathbf{T}_{\mathbf{Smooth}}^{W}\left(  i_{\mathbf{Smooth}}\left(  M\right)
\right)
\]
in the category $\mathcal{K}_{\mathbf{Smooth}}$. Since the equalizer of the
former diagram is transversal by Lemma \ref{t6.3}, it is preserved by the
functor $i_{\mathbf{Smooth}}$\ by Theorem \ref{t6.2}, so that the desired
result follows.
\end{proof}

\begin{corollary}
Given a morphism $\varphi:W_{1}\rightarrow W_{2}$ in the category
$\mathbf{Weil}_{\mathbf{R}}$ and a fibered manifold $\pi:E\rightarrow M$\ in
the category $\mathbf{Smooth}$, we have
\begin{align*}
& i_{\mathbf{Smooth}}\left(  \underline{\overrightarrow{\alpha}}_{\varphi
}^{\mathbf{Smooth}}\left(  \pi\right)  \right) \\
& =\alpha_{\varphi}^{\mathbf{Smooth}}\left(  \left(  i_{\mathbf{Smooth}%
}\left(  \pi\right)  \right)  \right)
\end{align*}
\end{corollary}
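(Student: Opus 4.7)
The plan is to invoke the universal property of equalizers, using Theorem~\ref{t6.4} at both $W_{1}$ and $W_{2}$, along with Theorem~\ref{t5.2}. By definition, for $j=1,2$ the vertical Weil functor $\underline{\overrightarrow{\mathbf{T}}}_{\mathbf{Smooth}}^{W_{j}}(\pi)$ is the equalizer in $\mathbf{Smooth}$ of the canonical parallel pair
\[
P_{j},\,Q_{j}\colon \underline{\mathbf{T}}_{\mathbf{Smooth}}^{W_{j}}(E)\rightrightarrows \underline{\mathbf{T}}_{\mathbf{Smooth}}^{W_{j}}(M),
\]
and $\underline{\overrightarrow{\alpha}}_{\varphi}^{\mathbf{Smooth}}(\pi)$ is by construction the unique arrow between these two equalizers induced, via the universal property, from the ambient commutative square over $\underline{\alpha}_{\varphi}^{\mathbf{Smooth}}(E)$ and $\underline{\alpha}_{\varphi}^{\mathbf{Smooth}}(M)$. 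Similarly, $\alpha_{\varphi}^{\mathbf{Smooth}}(i_{\mathbf{Smooth}}(\pi))$ is the arrow uniquely induced between the corresponding equalizers in $\mathcal{K}_{\mathbf{Smooth}}$.

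First I would apply $i_{\mathbf{Smooth}}$ to the two $\mathbf{Smooth}$-side equalizer diagrams. By Lemma~\ref{t6.3} these equalizers are transversal, so Theorem~\ref{t6.2} ensures that the image diagrams remain equalizer diagrams in $\mathcal{K}_{\mathbf{Smooth}}$. Combining Theorems~\ref{t5.1} and~\ref{t5.2}, the images under $i_{\mathbf{Smooth}}$ of the pairs $(P_{j},Q_{j})$ are canonically identified with the canonical parallel pairs defining $\overrightarrow{\mathbf{T}}_{\mathbf{Smooth}}^{W_{j}}(i_{\mathbf{Smooth}}(\pi))$; this is precisely the content of Theorem~\ref{t6.4}.

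Second, I would compare the two induced arrows. The functoriality of $i_{\mathbf{Smooth}}$, together with Theorem~\ref{t5.2} applied to rewrite $i_{\mathbf{Smooth}}(\underline{\alpha}_{\varphi}^{\mathbf{Smooth}}(E))=\alpha_{\varphi}^{\mathbf{Smooth}}(i_{\mathbf{Smooth}}(E))$ and similarly for $M$, shows that $i_{\mathbf{Smooth}}(\underline{\overrightarrow{\alpha}}_{\varphi}^{\mathbf{Smooth}}(\pi))$ fits into exactly the ambient commutative square characterizing $\alpha_{\varphi}^{\mathbf{Smooth}}(i_{\mathbf{Smooth}}(\pi))$ via the universal property. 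Uniqueness of the induced arrow between equalizers then yields the desired equality.

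The principal obstacle is clerical: lining up, arrow by arrow, the images under $i_{\mathbf{Smooth}}$ of the parallel pair defining $\underline{\overrightarrow{\mathbf{T}}}_{\mathbf{Smooth}}^{W_{j}}(\pi)$ with the parallel pair defining $\overrightarrow{\mathbf{T}}_{\mathbf{Smooth}}^{W_{j}}(i_{\mathbf{Smooth}}(\pi))$, in particular ensuring that the components involving $\underline{\alpha}_{W_{j}\to\mathbf{R}}^{\mathbf{Smooth}}$ and $\underline{\alpha}_{\mathbf{R}\to W_{j}}^{\mathbf{Smooth}}$ are carried correctly to the analogous $\alpha$'s on $\mathcal{K}_{\mathbf{Smooth}}$. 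Once that bookkeeping is verified, the remainder is purely formal from the universal property.
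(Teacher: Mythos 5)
The paper states this corollary without proof, treating it as immediate from the proof of Theorem \ref{t6.4}, and your argument is exactly the intended one. You correctly characterize both sides as the maps induced between equalizers by the universal property---using Lemma \ref{t6.3} and Theorem \ref{t6.2} to see that $i_{\mathbf{Smooth}}$ carries the $\mathbf{Smooth}$-side equalizer diagrams to equalizer diagrams in $\mathcal{K}_{\mathbf{Smooth}}$, Theorems \ref{t5.1} and \ref{t5.2} to match the parallel pairs and the ambient commuting square, and uniqueness of the induced arrow to conclude---so the proposal is correct and follows the same route the paper implicitly relies on.
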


\end{document}